\newcommand{\mmp}{\mathbb{P}}
\newcommand{\od}{\overset{d}{=}}
\newcommand{\dod}{\overset{d}{\to}}
\newcommand{\tp}{\overset{P}{\to}}
\newcommand{\me}{\mathbb{E}}
\newcommand{\mr}{\mathbb{R}}
\newcommand{\mn}{\mathbb{N}}
\newcommand{\lin}{\underset{n\to\infty}{\lim}}
\newcommand{\lit}{\underset{t\to\infty}{\lim}}
\newtheorem{thm}{Theorem}[section]
\newtheorem{lemma}[thm]{Lemma}
\theoremstyle{definition}
\theoremstyle{remark}
\newtheorem{rem}[thm]{Remark}
\begin{document}
\title{Functional limit theorems for renewal shot noise processes with increasing response functions}
%
\author{Alexander Iksanov\footnote{ Faculty of Cybernetics, Taras Shevchenko National University of Kyiv, Kyiv-01601, Ukraine\newline e-mail:
iksan@univ.kiev.ua}\footnote{Supported by a grant awarded by the
President of Ukraine (project $\Phi$47/012)}}
\maketitle
\begin{abstract}
\noindent We consider renewal shot noise processes with response
functions which are eventually nondecreasing and regularly varying
at infinity. We prove weak convergence of renewal shot noise
processes, properly normalized and centered, in the space
$D[0,\infty)$ under the $J_1$ or $M_1$ topology. The limiting
processes are either spectrally nonpositive stable L\'{e}vy
processes, including the Brownian motion, or inverse stable
subordinators (when the response function is slowly varying), or
fractionally integrated stable processes or fractionally
integrated inverse stable subordinators (when the index of regular
variation is positive). The proof exploits fine properties of
renewal processes, distributional properties of stable L\'{e}vy
processes and the continuous mapping theorem.

\end{abstract}
\noindent Keywords: continuous mapping theorem, fractionally
integrated (inverse) stable process; functional limit theorem;
$M_1$ topology; renewal shot noise process; spectrally negative
stable process

\section{Introduction}

Let $(\xi_k)_{k\in\mn}$ be independent copies of a positive random
variable $\xi$. Denote $$S_0:=0, \ \ S_n:=\xi_1+\ldots+\xi_n, \ \
n\in\mn$$ and $$N(t):=\#\{k\in\mn_0: S_k\leq t\}=\inf\{k\in\mn:
S_k>t\}, \ \ t\in\mr.$$ It is clear that $N(t)=0$ for $t<0$.

Let $D:=D[0,\infty)$ denote the Skorohod space of right-continuous
real-valued functions on $[0,\infty)$ with finite limits from the
left. Elements of $D$ are sometimes called {\it c\`{a}dl\`{a}g
functions}. For a c\`{a}dl\`{a}g function $h$, we define
\begin{equation}\label{shot}
X(t):=\sum_{k\geq 0}h(t-S_k)1_{\{S_k\leq
t\}}=\int_{[0,\,t]}h(t-y){\rm d}N(y), \ \ t\geq 0,
\end{equation}
and call $\big(X(t)\big)_{t\geq 0}$ a renewal shot noise process.
The function $h$ is called an {\it impulse response function} or
just {\it response function}. Note that the so defined $X(t)$ is
a.s. finite, for each $t\geq 0$.

Processes \eqref{shot} and more general shot noise processes have
been used to model a lot of diverse phenomena, see, for instance,
\cite{Hsing} and \cite{Verv} and references therein. More recent
contributions \cite{KlupMik1} and \cite{Sam} have discussed
applications in risk theory and finance, respectively. A
non-exhaustive list of works concerning mathematical aspects of
shot noise processes is given in \cite{AIM}.

Since $h$ is c\`{a}dl\`{a}g, for every $t\geq 0$,
$\big(X(ut)\big)_{u\geq 0}$ is a random element taking values in
$D$. Our aim is to prove the weak convergence of, properly
normalized and centered, $X(ut)$ in $D$ under the $J_1$ or $M_1$
topology. In what follows the symbols
$\overset{J_1}{\Rightarrow}$, $\overset{M_1}{\Rightarrow}$ and
$\Rightarrow$ mean that the convergence takes place under the
$J_1$ topology, under the $M_1$ topology or under either of these,
respectively. The $J_1$ topology is the commonly used topology in
$D$ (see \cite{Bil} and \cite{Whitt2}). We recall that $\lin
x_n=x$ in $D[0,T]$, $T>0$, under the $M_1$ topology if $$\lin \inf
\max\big( \underset{t\in [0,\,1]}{\sup}\,|r_n(t)-r(t)|,
\underset{t\in [0,\,1]}{\sup}\,|u_n(t)-u(t)|\big)=0,$$ where the
infimum is taken over all parametric representations $(u,r)$ of
$x$ and $(u_n,r_n)$ of $x_n$, $n\in\mn$. We refer to p.~80-82 in
\cite{Whitt2} for further details and definitions. The $M_1$
topology which like the $J_1$ topology was introduced in
Skorohod's seminal paper \cite{Skor} is not that common. Its
appearances in the probability literature are comparatively rare.
An incomplete list of works which have effectively used the $M_1$
topology in diverse applied problems includes \cite{Avr},
\cite{Bas}, \cite{Rio}, \cite{WhittPang}, \cite{Berg} and
\cite{Tyran}. Remark 12.3.2 in \cite{Whitt2} gives more
references.

The $J_1$ convergence in $D[0,1]$, as $n\to\infty$, of
$\sum_{k\geq 0}h(t-n^{-1}S_k)1_{\{S_k\leq nt\}}$, properly
normalized and centered, to a Gaussian process can be derived from
more general results obtained in \cite{Igl}. When $\big(N(t)\big)$
is the Poisson process, a functional convergence to a Gaussian
process and an infinite variance stable process was proved in
\cite{KlupMik1} (see also \cite{Hein} and references therein) and
\cite{KlupMik3}, respectively, for shot noise processes which are
more general than ours. We are not aware of any papers which would
prove functional limit theorems for the shot noise processes
$X(ut)$ in the case of a {\it general} renewal process
$\big(N(t)\big)$. In particular, in this wider framework a new
technique is needed intended to replace the characteristic
functions approach available in the Poisson case. To some extent,
this has served as the first motivation for the present research.
Secondly (and more importantly), based on the technique developed
in \cite{GneIks} and \cite{GIM} we expect that a particular case
of Theorem \ref{main2} with $h$ being the distribution function of
a positive random variable will form a basis for obtaining
functional limit theorems for the number of occupied boxes in the
Bernoulli sieve (see \cite{GIM} for the definition and further
details).

While the weak convergence of the renewal shot noise processes
with eventually nonincreasing response functions will be
investigated in a forthcoming paper \cite{IMM}, here we only
consider the renewal shot noise processes with eventually
nondecreasing response functions. Theorem \ref{main2} which is our
main result relies heavily upon known functional limit theorems
for $N(t)$. To shorten the presentation the latter are not given
as a separate statement. Rather they are included in Theorem
\ref{main2} as a particular case with $h(y)=1_{[0,\,\infty)}(y)$.
Note that all bounded eventually nondecreasing $h$ with positive
$\lit h(t)$ satisfy \eqref{0909} below with $\beta=0$ and
$\ell^\ast(x)\equiv \lit h(t)$. Therefore these are covered by the
theorem.
\begin{thm}\label{main2}
Let $h:\mr^+\to\mr$ be a locally bounded, right-continuous and
eventually nondecreasing function, and
\begin{equation}\label{0909}
h(x) \ \sim \ x^\beta \ell^\ast(x), \ \ x \to \infty,
\end{equation}
for some $\beta\in [0,\infty)$ and some $\ell^\ast$ slowly varying
at $\infty$.

\noindent (A1) If $\sigma^2:={\rm Var}\,\xi<\infty$ then
$${X(ut)-\mu^{-1}\int_{[0,\,ut]}h(y){\rm d}y\over h(t)\sqrt{\sigma^2\mu^{-3}t}} \ \overset{J_1}{\Rightarrow} \ \int_{[0,\,u]}(u-y)^\beta {\rm d}
W_2(y), \ \ t\to\infty,$$ where $\mu:=\me \xi<\infty$ and
$\big(W_2(u)\big)_{u\geq 0}$ is a Brownian motion.

\noindent (A2) If $\sigma^2=\infty$ and
$$\int_{[0,\,x]}y^2\mmp\{\xi\in{\rm d}y\} \ \sim \ \ell(x), \ \
x\to\infty,$$ for some $\ell$ slowly varying at $\infty$, then
$${X(ut)-\mu^{-1}\int_{[0,\,ut]}h(y){\rm d}y\over h(t)\mu^{-3/2}c(t)} \ \overset{J_1}{\Rightarrow} \ \int_{[0,\,u]}(u-y)^\beta {\rm d}W_2(y), \ \ t\to\infty,$$
where $c(t)$ is any positive continuous function such that $\lit
{t\ell(c(t))\over c^2(t)}=1$ and $\big(W_2(u)\big)_{u\geq 0}$ is a
Brownian motion.

\noindent (A3) If
\begin{equation}\label{2}
\mmp\{\xi>x\} \ \sim \ x^{-\alpha}\ell(x), \ \ x\to\infty,
\end{equation}
for some $\alpha\in (1,2)$ and some $\ell$ slowly varying at
$\infty$, then
$${X(ut)-\mu^{-1}\int_{[0,\,ut]}h(y){\rm d}y\over h(t)\mu^{-1-1/\alpha}c(t)} \ \overset{M_1}{\Rightarrow} \ \int_{[0,\,u]}(u-y)^\beta {\rm d}W_\alpha(y), \ \ t\to\infty,$$
where $c(t)$ is any positive continuous function such that $\lit
{t\ell(c(t))\over c^\alpha(t)}=1$ and
$\big(W_\alpha(u)\big)_{u\geq 0}$ is an $\alpha$-stable L\'{e}vy
process such that $W_\alpha(1)$ has the characteristic function
\begin{equation}\label{st1}
z\mapsto \exp\big\{-|z|^\alpha
\Gamma(1-\alpha)(\cos(\pi\alpha/2)+i\sin(\pi\alpha/2)\, {\rm
sgn}(z))\big\}, \ z\in\mr.
\end{equation}

\noindent (A4) If condition \eqref{2} holds for some $\alpha\in
(0,1)$ then
$${\mmp\{\xi>t\}\over h(t)}X(ut) \
\overset{J_1}{\Rightarrow} \ \int_{[0,\,u]}(u-y)^\beta {\rm
d}V_\alpha(y), \ \ t\to\infty,$$ where
$\big(V_\alpha(u)\big)_{u\geq 0}$ is an inverse $\alpha$-stable
subordinator defined by
$$V_\alpha(u):=\inf\{s\geq 0: D_\alpha(s)>u\},$$ where $\big(D_\alpha(t)\big)_{t\geq 0}$ is an
$\alpha$-stable subordinator with $-\log \me e^{-sD_\alpha(1)}=
\Gamma(1-\alpha)s^\alpha$, $s\geq 0$.
\end{thm}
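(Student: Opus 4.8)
\begin{myproof}
The plan is to reduce all four regimes to a single functional limit theorem for the renewal counting process $N$ and then to transport it through the response function $h$ by a continuous-mapping argument. First I would rewrite $X(ut)$ so that $N$ appears as an \emph{integrand}. Since $h$ is eventually nondecreasing, there is $a>0$ with $h$ nondecreasing on $[a,\infty)$; the contribution to $X(ut)$ of the summands with $t-S_k\leq a$ is asymptotically negligible after normalization, so we may treat $h$ as right-continuous and nondecreasing, whence ${\rm d}h$ is a locally finite measure and Fubini's theorem yields
$$X(ut)=h(0)N(ut)+\int_{(0,\,ut]}N(ut-z)\,{\rm d}h(z).$$
Subtracting the centering and using $\int_{[0,ut]}h(y)\,{\rm d}y=\int_{[0,ut]}h(ut-y)\,{\rm d}y$ gives, with $\widehat N(r):=N(r)-\mu^{-1}r$,
$$X(ut)-\mu^{-1}\int_{[0,\,ut]}h(y)\,{\rm d}y=h(0)\widehat N(ut)+\int_{(0,\,ut]}\widehat N(ut-z)\,{\rm d}h(z).$$
After the substitution $z=ts$ and division by the normalizing factor $h(t)d(t)$ (respectively, multiplication by $\mmp\{\xi>t\}/h(t)$ and use of $N$ in place of $\widehat N$ in regime (A4)), every case takes the common form
$$\frac{X(ut)-(\text{centering})}{(\text{normalization})}=\frac{h(0)}{h(t)}Z_t(u)+\int_{(0,\,u]}Z_t(u-s)\,\frac{{\rm d}h(ts)}{h(t)},$$
where $Z_t(v):=\widehat N(vt)/d(t)$ in (A1)--(A3) and $Z_t(v):=\mmp\{\xi>t\}N(vt)$ in (A4), and where $d(t)$ is the relevant renewal normalization. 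For $\beta>0$ the boundary term vanishes since $h(t)\to\infty$.

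Second, the $\beta=0$, $h\equiv 1_{[0,\infty)}$ instance of the theorem is precisely the known functional limit theorem for $N$, which I would quote: $Z_t\Rightarrow Z$ in $D[0,\infty)$, where $Z=W_2$ under $J_1$ in (A1) and (A2), $Z=W_\alpha$ under $M_1$ in (A3), and $Z=V_\alpha$ under $J_1$ in (A4). The precise forms of $c(t)$ and of the characteristic function \eqref{st1} are inherited from these theorems, which is where the distributional properties of stable L\'{e}vy processes enter. It remains to pass to the limit in the displayed identity. Writing $g_t(s):=h(ts)/h(t)$, the uniform convergence theorem for regularly varying functions gives $\lit g_t(s)=s^\beta$ locally uniformly on $(0,\infty)$, so the Stieltjes measures ${\rm d}g_t$ converge to the measure of $s\mapsto s^\beta$, namely $\beta s^{\beta-1}{\rm d}s$ for $\beta>0$. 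The target is therefore
$$\int_{(0,\,u]}Z_t(u-s)\,{\rm d}g_t(s)\ \Rightarrow\ \int_{(0,\,u]}Z(u-s)\,\beta s^{\beta-1}\,{\rm d}s=\int_{[0,\,u]}(u-v)^\beta\,{\rm d}Z(v),$$
the last equality combining the substitution $v=u-s$ with integration by parts (valid since $Z(0)=0$); for $\beta=0$ the functional degenerates to $Z_t(u)$ and, combined with the boundary term $h(0)Z_t(u)/h(t)$ into which the mass of ${\rm d}g_t$ concentrates at $s=0$, returns $Z(u)=\int_{[0,u]}{\rm d}Z$.

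The decisive step, and the one I expect to be hardest, is this convergence of Stieltjes integrals; it is not a bare application of the continuous mapping theorem, because the functional itself varies with $t$ through $g_t$. I would split $\int Z_t(u-s)\,{\rm d}g_t(s)=\Phi(Z_t)(u)+R_t(u)$, where $\Phi(x)(u):=\beta\int_0^u(u-v)^{\beta-1}x(v)\,{\rm d}v$ (and $\Phi(x)=x$ when $\beta=0$) is the limiting fractional-integral operator, and $R_t$ measures the discrepancy between integrating against ${\rm d}g_t$ and against $\beta s^{\beta-1}{\rm d}s$. The map $\Phi$ is continuous from $(D[0,\infty),M_1)$ into $(C[0,\infty),\|\cdot\|_\infty)$ — integration against the absolutely continuous kernel is insensitive to the jumps of its argument, and the singularity $(u-v)^{\beta-1}$ is integrable when $0<\beta<1$ — so $\Phi(Z_t)\Rightarrow\Phi(Z)$ by the continuous mapping theorem, producing the right-hand side above. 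The obstacle is to show $R_t\overset{P}{\to}0$ uniformly on compact $u$-intervals: the naive bound by $\sup_s|g_t(s)-s^\beta|$ times the total variation of $Z_t$ fails in regimes (A1)--(A3), since the renewal paths have total variation of order $t/d(t)\to\infty$. I would instead argue through a Skorohod coupling $Z_t\to Z$ a.s., exploiting that the limiting integrator $\beta s^{\beta-1}{\rm d}s$ is atomless (so the jumps of the $M_1$-limit $W_\alpha$ in (A3) fall on a null set and do not obstruct convergence), together with the local-uniform convergence $g_t\to s^\beta$ and uniform integrability near the origin $s=0$, where both the kernel singularity and the small-argument behaviour of $g_t$ must be controlled via Potter's bounds; dominated convergence then yields $R_t\to 0$. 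Finally, convergence in $D[0,\infty)$ follows from convergence in each $D[0,T]$, because the limits have no fixed discontinuities and the restriction maps are continuous.
\end{myproof}
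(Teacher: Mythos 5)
You have the right skeleton, and for the finite-dimensional distributions your argument essentially coincides with the paper's: rewrite $X(ut)$ as a convolution of $N$ against ${\rm d}h$, quote the functional limit theorems for $N$, note that the measures ${\rm d}g_t$ converge weakly to $\beta s^{\beta-1}{\rm d}s$ (an atomless measure), and invoke a "weak convergence of measures plus a.e. convergence of integrands" lemma. The paper does exactly this for the f.d.d.'s (its Lemma \ref{impo}, resting on a result of Brozius), and it also carries out the reduction to a nondecreasing $h$ that you assert — though note this reduction is not free: it needs the uniform-in-$u$ estimate $\sup_{u\le T}(N(ut)-N(ut-a))/b(t)\tp 0$ (the paper's Lemma \ref{diff}), and the paper additionally smooths $h$ to a continuous $h^\ast$ because several later steps require $h_t$ to be continuous or absolutely continuous.

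The genuine gap is at the process level, and it sits exactly where you say the hardest step is. Your plan is to show $\sup_{u\le T}|R_t(u)|\tp 0$ by a Skorohod coupling $Z_t\to Z$ plus dominated convergence. In cases (A1), (A2), (A4) the limit $Z$ is continuous, so the coupled convergence is locally uniform and your bound $|R_t(u)|\le \sup_v|Z_t(v)-Z(v)|\cdot(\text{bounded mass})+|\int Z(u-s)({\rm d}g_t-\nu)({\rm d}s)|$ can be pushed through uniformly in $u$; with more care this route is viable there (the paper instead proves $C$-tightness by a fourth-moment/Potter-bound computation in (A1)--(A2) and uses monotonicity of paths plus Bingham's theorem in (A4)). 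But in case (A3) the coupling gives $Z_t\to W_\alpha$ only in $M_1$, which is \emph{not} locally uniform convergence, so $\sup_v|Z_t(v)-Z(v)|$ does not tend to zero and "dominated convergence" has nothing to act on: near a jump time of $W_\alpha$ the paths $Z_t$ oscillate across the completed graph, and the convolution against ${\rm d}g_t$ can pick up this oscillation for a whole range of $u$'s simultaneously. The paper needs two substantial ingredients here that your proposal omits: (i) to replace $Z_t$ by $W_\alpha$ inside the convolution it uses a parametric-representation argument (its Lemma \ref{impo1}) that relies on the special fact that the maximum jump of the renewal process, after normalization, vanishes; and (ii) to get $M_1$-tightness of $u\mapsto\int W_\alpha(y)\,{\rm d}_y(-h_t(u-y))$ it verifies the Avram--Taqqu oscillation criterion, which requires two-sided tail estimates exploiting the spectral negativity of $W_\alpha$ (all positive moments of $W_\alpha^+(1)$ finite, $\mmp\{W_\alpha(1)<-x\}\sim{\rm const}\,x^{-\alpha}$), with a separate refined estimate when $\beta=0$. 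Without some substitute for these two steps, case (A3) — and in particular the $\beta=0$ instance, where your operator $\Phi$ degenerates to the identity and the whole statement reduces to controlling the discrepancy between $\int Z_t(u-s)\,{\rm d}g_t(s)$ and $Z_t(u)$ uniformly in $u$ — is not proved.
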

\begin{rem}
Theorem \ref{main2} does not cover one case for which we have the
following {\it conjecture}:

\noindent (A5) If condition \eqref{2} holds with $\alpha=1$ then
$${m(t)\over h(t)c(t/m(t))}\bigg(X(ut)-{1\over m(c(t/m(t)))}
\int_{[0,\,ut]}h(y){\rm d}y\bigg) \ \overset{M_1}{\Rightarrow}\
\int_{[0,\,u]}(u-y)^\beta {\rm d}W_1(y),$$ where $c(t)$ is any
positive continuous function such that $\lit {t\ell(c(t))\over
c(t)}=1$, $m(t):=\int_{[0,\,t]}\mmp\{\xi>y\}{\rm d}y$, $t>0$, and
$\big(W_1(u)\big)_{u\geq 0}$ is a $1$-stable L\'{e}vy process such
that $W_1(1)$ has the characteristic function
\begin{equation*}\label{stable}
z\mapsto \exp\big\{-|z|(\pi/2-i\log|z|\,{\rm sgn}(z))\big\}, \
z\in\mr.
\end{equation*}
\end{rem}
The rest of the paper is organized as follows. In Section \ref{ip}
we recall a simplified definition of the stochastic integral in
the case when the integrand is a deterministic function. In
Section \ref{lp} we discuss properties of the limiting processes
appearing in Theorem \ref{main2}. The proof of Theorem \ref{main2}
is given in Section \ref{pr}. In Section \ref{who} we discuss an
extension of Theorem \ref{main2} to response functions $h$
concentrated on the whole line. Finally Appendix collects all the
needed auxiliary information.

\section{Defining a stochastic integral via integration by
parts}\label{ip}

There is a general definition of a stochastic integral with
integrand being a locally bounded predictable process and
integrator being a semimartingale, in particular, a L\'{e}vy
process (see, for instance, Theorem 23.4 in \cite{Kalen}).
However, when the integrand is a deterministic function of bounded
variation there is an equivalent definition which is much simpler.
It turns out that the latter stochastic integral can be defined in
terms of usual Lebesgue-Stieltjes integral and integration by
parts.

Let $f, g\in D[a,b]$, $b>a\geq 0$ and $f$ has bounded variation.
Using Lemma \ref{integr} we define the integral
$\int_{(a,b]}f(b-y){\rm d}g(y)$ by formal integration by parts
$$\int_{(a,b]}f(b-y){\rm d}g(y)=f(0-)g(b)-f((b-a)-)g(a)-\int_{(a,b]}g(y){\rm d}f(b-y).$$
Now if $\big(W(y)\big)_{y\geq 0}=\big(W(y,\omega)\big)_{y\geq 0}$
is a L\'{e}vy process (it has paths in $D$) the definition above
with $g(y):=W(y,\omega)$, for each $\omega$, provides a pathwise
construction of the stochastic integral for all $\omega$:
\begin{equation}\label{parts}
\int_{(a,b]}f(b-y){\rm
d}W(y)=f(0-)W(b)-f((b-a)-)W(a)-\int_{(a,b]}W(y){\rm d}f(b-y).
\end{equation}
From this definition and continuity theorem for characteristic
functions we conclude that
\begin{equation}\label{char}
\log \me \exp\bigg({\rm i}t \int_{(a,\,b]}f(b-y){\rm
d}W(y)\bigg)=\int_{(a,\,b]}\log\me \exp\big({\rm
i}tf(b-y)W(1)\big){\rm d}y, \ \ t\in\mr
\end{equation}
(see Lemma 5.1 in \cite{GneIks} for a similar argument). Let
$f^\ast\in L_2[a,b]$ and $\big(W_2(y)\big)_{y\geq 0}$ be a
Brownian motion. Then
$$-\log \me \exp\bigg({\rm i}t \int_{[a,\,b]}f^\ast(y){\rm
d}W_2(y)\bigg)=2^{-1}t^2\int_{[a,\,b]}(f^\ast)^2(y){\rm d}y, \ \
t\in\mr.$$ Hence the random variable $\int_{[a,b]}f^\ast(y){\rm
d}W_2(y)$ has the same law as
$W_2(1)\sqrt{\int_{[a,b]}(f^\ast)^2(y){\rm d}y}$ which implies the
moment formulae to be used in the sequel:
\begin{equation}\label{mo}
\me \bigg(\int_{[a,b]}f^\ast(y){\rm
d}W_2(y)\bigg)^2=\int_{[a,\,b]}(f^\ast)^2(y){\rm d}y, \ \ \me
\bigg(\int_{[a,b]}f^\ast(y){\rm
d}W_2(y)\bigg)^4=3\bigg(\int_{[a,\,b]}(f^\ast)^2(y){\rm
d}y\bigg)^2.
\end{equation}
Of course, all moments of odd orders equal zero.

\section{Properties of the limit processes in Theorem
\ref{main2}}\label{lp}

Recall that $\big(W_2(u)\big)_{u\geq 0}$ denotes a Brownian motion
and, for $\alpha\in (1,2)$, $\big(W_\alpha(u)\big)_{u\geq 0}$
denotes an $\alpha$-stable L\'{e}vy process such that
$W_\alpha(1)$ has the characteristic function given in
\eqref{st1}.

Let $\beta>0$. The limit processes
$\big(Y_{\alpha,\,\beta}(u)\big)_{u\geq 0}$ defined by
\begin{equation}\label{repr1}
Y_{\alpha,\,\beta}(u):=\int_{[0,\,u]}(u-y)^\beta{\rm
d}W_\alpha(y)=\beta \int_{[0,\,u]}(u-y)^{\beta-1}W_\alpha(y){\rm
d}y
\end{equation}
are called the {\it $\alpha$-stable Riemann-Liouville processes}
or {\it fractionally integrated $\alpha$-stable processes} (see,
for instance, \cite{Aur}).

We now establish some properties of the processes
$\big(Y_{\alpha,\,\beta}(u)\big)$.

\noindent (P1) Their paths are continuous a.s.

This follows from the second equality in \eqref{repr1} and Lemma
\ref{co} (a).

\noindent (P2) They are self-similar with Hurst parameter
$\beta+\alpha^{-1}$, i.e., for every $c>0$
$$\big(Y_{\alpha,\,\beta}(cu)\big)_{u\geq 0}\overset{{\rm f.\,d.}}{=} \big(c^{\beta+\alpha^{-1}}Y_{\alpha,\,\beta}(u)\big)_{u\geq 0},$$ where $\overset{{\rm f.\,d.}}{=}$
denotes the equality of finite-dimensional distributions (see
\cite{Embr} for an accessible introduction to the theory of
self-similar processes).

We only prove this property for two-dimensional distributions. For
any $0<u_1<u_2$ and any $\alpha_1, \alpha_2\in\mr$ we have
\begin{eqnarray*}
\beta^{-1}\big(\alpha_1Y_{\alpha,\,\beta}(cu_1)+\alpha_2Y_{\alpha,\,\beta}(cu_2)\big)&=&
\alpha_1\int_{[0,\,cu_1]}(cu_1-y)^{\beta-1}W_\alpha(y){\rm
d}y\\&+&\alpha_2\int_{[0,\,cu_2]}(cu_2-y)^{\beta-1}W_\alpha(y){\rm
d}y\\&=&c^\beta\int_{[0,\,u_2]}\big(\alpha_1(u_1-y)^{\beta-1}1_{[0,u_1]}(y)\\&+&\alpha_2(u_2-y)^{\beta-1}
\big)W_\alpha(cy){\rm d}y\\ &\od&
c^{\beta+\alpha^{-1}}\int_{[0,\,u_2]}\big(\alpha_1(u_1-y)^{\beta-1}1_{[0,u_1]}(y)\\&+&\alpha_2(u_2-y)^{\beta-1}
\big)W_\alpha(y){\rm d}y\\&=&
\beta^{-1}c^{\beta+\alpha^{-1}}\big(\alpha_1Y_{\alpha,\,\beta}(u_1)+\alpha_2Y_{\alpha,\,\beta}(u_2)\big).
\end{eqnarray*}
where the second equality follows by the change of variable, and
the third is a consequence of the self-similarity with parameter
$\alpha^{-1}$ of $\big(W_\alpha(u)\big)$.

\noindent (P3) For fixed $u>0$, $$Y_{\alpha,\,\beta}(u)\od
\int_{[0,\,u]}y^\beta{\rm d}W_\alpha(y)\od
\bigg({u^{\alpha\beta+1}\over \alpha
\beta+1}\bigg)^{1/\alpha}W_\alpha(1).$$ While the first
distributional equality follows from the fact that, for fixed $u$,
$$\big(W_\alpha(u)-W_\alpha(u-y)\big)_{y\in [0,\,u]}\od
\big(W_\alpha(y)\big)_{y\in [0,\,u]},$$ the second is implied by
the equality
\begin{equation*}
\log \me \exp\big({\rm
i}tY_{\alpha,\,\beta}(u)\big)=\int_{[0,\,u]}\log\me \exp\big({\rm
i}t y^\beta W_\alpha(1)\big){\rm d}y, \ \ t\in\mr.
\end{equation*}
(see \eqref{char}).

\noindent (P4) The increments of $\big(Y_{\alpha,\,\beta}(u)\big)$
are neither independent, nor stationary.

Let $0<v<u$ and $\alpha=2$. Since $\big(W_2(u)\big)$ has
independent increments $X_{2,\,\beta}(v)$ and
$\int_{[v,\,u]}(u-y)^\beta{\rm d}W_2(y)$ are independent. Set
$$r_\beta(u,v):=\bigg(\int_{[0,\,v]}\big((u-y)^\beta-(v-y)^\beta\big)^2{\rm d}y\bigg)^{1/2}.$$ It seems that the integral cannot be evaluated in terms of elementary functions.
Fortunately we only have to check that $r_\beta(u,v)\neq 0$, for
{\it some} $v<u$. Using the inequality $(x-y)^2\geq
2^{-1}x^2-y^2$, $x,y\in\mr$ we conclude that $$r^2_\beta(u,v)\geq
\int_{[0,\,v]}\big(2^{-1}(u-y)^{2\beta}-(v-y)^{2\beta}\big){\rm
d}y= {1\over
2\beta+1}\big(2^{-1}(u^{2\beta+1}-(u-v)^{2\beta+1})-v^{2\beta+1}\big).$$
There is a unique solution $x^\ast$ to the equation
$$x^{2\beta+1}-(x-1)^{2\beta+1}=2.$$ Taking any $x>x^\ast \vee 1$
and any $v>0$ we have $r^2_\beta(xv,v)>0$. In view of the
distributional equality
$$\bigg(\int_{[0,\,v]}(v-y)^\beta{\rm d}W_2(y),\int_{[0,\,xv]}\big((xv-y)^\beta-(v-y)^\beta\big){\rm d}W_2(y)\bigg)\od
\bigg(\bigg({v^{2\beta+1}\over 2\beta+1}\bigg)^{1/2}W_2(1),
r_\beta(xv,v) W_2(1)\bigg),$$ $X_{2,\,\beta}(v)$ and
$\int_{[0,\,xv]}\big((xv-y)^\beta-(v-y)^\beta\big){\rm d}W_2(y)$
are strongly dependent.  Therefore, $X_{2,\,\beta}(v)$ and
$X_{2,\,\beta}(xv)-X_{2,\,\beta}(v)$ are not independent.

\noindent Let $\alpha\in (1,2)$. If the increments were
independent the continuous process
$\big(Y_{\alpha,\,\beta}(u)\big)$ would be Gaussian (see Theorem 5
on p.~189 in \cite{GihSk}) which is not the case.

If the increments were stationary the characteristic function of
$Y_{\alpha,\,\beta}(u)-Y_{\alpha,\,\beta}(v)$ for $0<v<u$ would be
a function of $u-v$. This is however not the case as is seen from
formula
$$\log \me \exp\bigg({\rm i}t\big(Y_{\alpha,\,\beta}(u)-Y_{\alpha,\,\beta}(v)\big)\bigg)=\int_{[0,\,u]}\log\me
\exp\bigg({\rm i}t \big((u-y)^\beta-(v-y)^\beta
1_{[0,\,v]}(y)\big) W_\alpha(1)\bigg){\rm d}y, \ \ t\in\mr.$$

Recall that, for $\alpha\in (0,1)$, $\big(V_\alpha(u)\big)_{u\geq
0}$ denotes an inverse $\alpha$-stable subordinator. Let
$\beta>0$. The limit processes
$\big(Z_{\alpha,\,\beta}(u)\big)_{u\geq 0}$ defined by
\begin{equation*}
Z_{\alpha,\,\beta}(u):=\int_{[0,\,u]}(u-y)^\beta{\rm
d}V_\alpha(y)=\beta \int_{[0,\,u]}(u-y)^{\beta-1}V_\alpha(y){\rm
d}y,
\end{equation*}
where the integral is a pathwise Lebesgue-Stieltjes integral, will
be called the {\it fractionally integrated inverse $\alpha$-stable
subordinators}.

We now establish some properties of these processes.

\noindent (Q1) Their paths are continuous a.s.

Obvious.

\noindent (Q2) They are self-similar with Hurst parameter
$\beta+\alpha$.

This is implied by the self-similarity with index $\alpha$ of
$\big(V_\alpha(u)\big)$.

\noindent (Q3) The law of $Z_{\alpha,\,\beta}(u)$ is uniquely
determined by its moments
\begin{equation}\label{mom}
\me \big(Z_{\alpha,\,\beta}(u)\big)^k=u^{k(\alpha+\beta)}{k!\over
\Gamma^k(1-\alpha)}\prod_{j=1}^k
{\Gamma(\beta+1+(j-1)(\alpha+\beta))\over
\Gamma(j(\alpha+\beta)+1)}, \ \ k\in\mn,
\end{equation}
where $\Gamma(\cdot)$ is the gamma function. In particular,
\begin{equation}\label{127} Z_{\alpha,\, \beta}(1) \od
\int_0^R e^{-cZ_\alpha(t)}{\rm d}t,
\end{equation}
where $R$ is a random variable with the standard exponential law
which is independent of $\big(Z_\alpha(u)\big)_{u\geq 0}$ a
drift-free subordinator with no killing and the L\'{e}vy measure
$$\nu_\alpha({\rm d}t)={e^{-t/\alpha}\over (1-e^{-t/\alpha})^{\alpha+1}}1_{(0,\,\infty)}(t){\rm
d}t,$$ and  $c:=(\alpha+\beta)/\alpha$.

From the results obtained in \cite{Mol} it follows that
$\big(V_\alpha(u)\big)$ is a local time at level $0$ for the
$2(1-\alpha)$-dimensional Bessel process. Therefore, \eqref{mom}
is nothing else but a specialization of formula (4.3) in
\cite{Yor}.

One can check that $$\Phi_\alpha(x):=-\log \me
e^{-xZ_\alpha(1)}={\Gamma(1-\alpha)\Gamma(\alpha x+1)\over
\Gamma(\alpha(x-1)+1)}-1, \ \ x\geq 0.$$ Formula \eqref{mom} with
$u=1$ can be rewritten in an equivalent form
\begin{eqnarray*}
\me Z_{\alpha,\,\beta}^k(1)&=&{k!\over (\Phi_\alpha(c)+1)\ldots
(\Phi_\alpha(ck)+1)}\nonumber\\&=& {k!\over
\prod_{j=1}^k(1-\alpha+j(\alpha+\beta)){\rm B}(1-\alpha,
1+k(\alpha+\beta))}, \ \ k\in\mn,
\end{eqnarray*}
where ${\rm B}(\cdot,\cdot)$ is the beta function, which, by
Theorem 2(i) in \cite{BerYor}, entails distributional equality
\eqref{127}. From the inequality
$$\int_0^R e^{-cZ_\alpha(t)}{\rm d}t\leq R,$$ and the fact that
$\me e^{aR}<\infty$, for $a\in (0,1)$, we conclude that the law of
$Z_{\alpha,\, \beta}(1)$ has some finite exponential moments and
thereby is uniquely determined by its moments.

\noindent (Q4) Their increments are not stationary.

When $\alpha+\beta\neq 1$ this follows from the fact that $\me
Z_{\alpha,\beta}(u)$ is a function of $u^{\alpha+\beta}$ rather
than $u$. The case $\alpha+\beta=1$ follows by continuity.

In \cite{Meer} it was shown that $\big(V_\alpha(u)\big)$ does not
have independent increments. Although we believe it is also the
case for $\big(Z_{\alpha,\,\beta}(u)\big)$, we refrain from
investigating this.

\section{Proof of Theorem \ref{main2}}\label{pr}

{\sc Cases (A1)-(A3)}. The functional limit theorems
$$W^{(t)}(u):={N(ut)-ut\over b(t)} \ \Rightarrow \ W_\alpha(u), \ \
t\to\infty,$$ with case dependent $b(t)$ and $W_\alpha(u)$, can be
found, for instance, in Theorem 1b (i) \cite{Bing}.

For $t>0$ set $$X_t(u):={X(ut)-\int_{[0,\,ut]}h(y){\rm d}y\over
b(t)h(t)}, \ \ u\geq 0.$$ Also recall the notation
$$Y_{\alpha,\,\beta}(u):=\beta \int_{[0,\,u]}W_\alpha(y)(u-y)^{\beta-1}{\rm
d}y=\int_{[0,\,u]}(u-y)^\beta{\rm d}W_\alpha(y), \ \ u\geq 0,$$ if
$\beta>0$, and set $Y_{\alpha,\,0}(u):=W_\alpha(u)$, $u\geq 0$, if
$\beta=0$.

We proceed by showing that, in the subsequent analysis, we can
replace $h$ by a nondecreasing and continuous on $\mr^+$ function
$h^\ast$ with $h^\ast(0)=0$ and such that $h^\ast(t)\sim h(t)$,
$t\to\infty$. To this end, we will use the two step reduction.

Suppose we have already proved that
$$X^\ast_t(u):={\int_{[0,\,ut]} h^\ast(ut-y){\rm d}N(y)-\int_{[0,\,ut]}h^\ast(y){\rm d}y\over b(t)h^\ast(t)} \ \Rightarrow \
Y_{\alpha,\,\beta}(u), \ \ t\to\infty.$$ Now to ensure the
convergence $X_t(u)\Rightarrow Y_{\alpha,\,\beta}(u)$,
$t\to\infty$, it suffices to check that, for any $T>0$,
\begin{equation}\label{aux3}
{\underset{u\in
[0,\,T]}{\sup}\,\bigg|\int_{[0,\,ut]}\big(h(ut-y)-h^\ast(ut-y)\big){\rm
d}N(y)\bigg|\over b(t)h(t)} \ \tp \ 0, \ \ t\to\infty,
\end{equation}
and
\begin{equation}\label{aux1}
{\underset{u\in
[0,\,T]}{\sup}\,\bigg|\int_{[0,\,ut]}\big(h(y)-h^\ast(y)\big){\rm
d}y\bigg|\over b(t)h(t)} \ \to \ 0, \ \ t\to\infty.
\end{equation}

\noindent {\sc Step 1}. We first prove an intuitively clear fact
that the behaviour of $h$ near zero does not influence the
asymptotics of $X_t$. In particular, if, given $a>0$, we replace
$h$ by any c\`{a}dl\`{a}g function $\widehat{h}$ such that
$\widehat{h}(t)=h(t)$ for $t\geq a$ the asymptotics of $X_t$ will
not change. Indeed,
\begin{eqnarray*}
\bigg|\int_{[0,\,u]}\big(h(t(u-y))-\widehat{h}(t(u-y))\big){\rm
d}_yN(ty)\bigg|&=&
\bigg|\int_{(u-a/t,\,u]}\big(h(t(u-y))-\widehat{h}(t(u-y))\big){\rm
d}_yN(ty)\bigg|\\&\leq& \underset{y\in
[0,\,a]}{\sup}\,\big|h(y)-\widehat{h}(y)\big|\big(N(ut)-N(ut-a)\big).
\end{eqnarray*}
Since $h$ and $\widehat{h}$ are c\`{a}dl\`{a}g, they are locally
bounded. After noting that the local boundedness entails the
finiteness of the last supremum, and that in all cases $b$ is
regularly varying with positive index, an appeal to Lemma
\ref{diff} allows us to conclude that, for any $T>0$,
\begin{eqnarray}\label{aux4}
&&{\underset{u\in
[0,\,T]}{\sup}\,\bigg|\int_{[0,\,u]}\big(h(t(u-y))-\widehat{h}(t(u-y))\big){\rm
d}_yN(ty)\bigg|\over b(t)}\nonumber\\&\leq& \underset{y\in
[0,\,a]}{\sup}\,\big|h(y)-\widehat{h}(y)\big|{\underset{u\in
[0,\,T]}{\sup}\,\big(N(ut)-N(ut-a)\big)\over b(t)} \ \tp \ 0, \ \
t\to\infty.
\end{eqnarray}
Arguing in a similar but simpler way we conclude that, for any
$T>0$,
\begin{equation}\label{aux2}
{\underset{u\in
[0,\,T]}{\sup}\,\bigg|\int_{[0,\,ut]}\big(h(y)-\widehat{h}(y)\big){\rm
d}y\bigg|\over b(t)} \ \to \ 0, \ \ t\to\infty.
\end{equation}
This justifies the claim. In particular, choosing $a$ large enough
we can make $\widehat{h}$ nondecreasing on $\mr^+$. Besides that,
we will take $\widehat{h}$ such that $\widehat{h}(t)=0$ for $t\in
[0,b]$ for some $b>0$ to be specified later.

\noindent {\sc Step 2}. Set $h^\ast(t):=\me
\widehat{h}((t-\theta)^+)$, where $\theta$ is a random variable
with the standard exponential distribution. It is clear that
$\widehat{h}(t)\geq h^\ast(t)$, $t\geq 0$. By Lemma \ref{red1},
$h^\ast$ is continuous on $\mr^+$ with $h^\ast(0)=0$ and
$h^\ast(t)\sim \widehat{h}(t)\sim h(t)$, $t\to\infty$.
Furthermore,
$$\int_{[0,\,t]}\big(\widehat{h}(y)-h^\ast(y)\big){\rm d}y \ \sim \ h(t), \ \ t\to\infty,$$
which immediately implies
$${\underset{u\in
[0,\,T]}{\sup}\,\bigg|\int_{[0,\,ut]}\big(\widehat{h}(y)-h^\ast(y)\big){\rm
d}y\bigg|\over
b(t)h(t)}={\int_{[0,\,Tt]}\big(\widehat{h}(y)-h^\ast(y)\big){\rm
d}y\over b(t)h(t)} \ \sim \ {T^\beta\over b(t)} \ \to \ 0, \ \
t\to\infty.$$ In combination with \eqref{aux2} the latter proves
\eqref{aux1}.

Now we intend to apply Lemma \ref{red} with $K_1=\widehat{h}$ and
$K_2=h^\ast$. Since $$\lit {\widehat{h}(t)+h^\ast(t)\over
\int_{[0,\,t]}\big(\widehat{h}(y)-h^\ast(y)\big){\rm d}y}=2$$ and
$$\int_{[0,\,Tt]}\big(\widehat{h}(y)-h^\ast(y)\big){\rm d}y \ \sim \ \ T^\beta
h(t), \ \ t\to\infty,$$ and in all cases $b(t)$ is regularly
varying with positive index, we have
\begin{eqnarray*}
&&{\underset{u\in
[0,\,T]}{\sup}\,\bigg|\int_{[0,\,ut]}\big(\widehat{h}(ut-y)-h^\ast(ut-y)\big){\rm
d}N(y)\bigg|\over b(t)h(t)}\\&=&{\underset{u\in [0,\,T]}{\sup}\,
\int_{[0,\,ut]}\big(\widehat{h}(ut-y)-h^\ast(ut-y)\big){\rm
d}N(y)\over b(t)h(t)}  \ \tp \ 0, \ \ t\to\infty.
\end{eqnarray*}
This together with \eqref{aux4} leads to \eqref{aux3}.

By Potter's bound (Theorem 1.5.6 (iii) in \cite{BGT}) for any
chosen $A>1$, $\delta\in (0,\alpha\beta)$ if $\beta>0$ and
$\delta\in (0,1/2)$ if $\beta=0$ (we take $\alpha=2$ in cases (A1)
and (A2)) there exists $t_0$ such that
$${h^{\ast \alpha}(ty)\over h^{\ast \alpha}(t)}\leq Ay^{\alpha\beta-\delta},$$
whenever $y\leq 1$ and $ty\geq t_0$. Choosing in the definition of
$\widehat{h}$ $b=t_0$, i.e., $\widehat{h}(t)=h^\ast(t)=0$ for
$t\in [0,t_0]$ we can and do assume that
\begin{equation}\label{tech}
{h^{\ast \alpha}(Tty)\over h^{\ast \alpha}(Tt)}\leq
Ay^{\alpha\beta-\delta} \ \ \text{and} \ \ {h^\ast(Tt)\over
h^\ast(t)}\leq A \big(T^{\beta+\delta}\vee
T^{\beta-\delta}\big)=:C(T),
\end{equation}
whenever $T>0$, $y\leq 1$, $Tt\geq t_0$ and $t\geq t_0$. The
second inequality in \eqref{tech} is just Potter's bound.

Setting $$h_t(x):=h^\ast(tx)/h^\ast(t),$$ using the fact that
$N(0)=1$ a.s. and integrating by parts, we have, for $t>0$ and
$u>0$
\begin{eqnarray*}
X^\ast_t(u)&=&\int_{[0,\,u]}h_t(u-y){\rm
d}_yW^{(t)}(y)\nonumber\\&=& {h^\ast(ut)\over
b(t)h^\ast(t)}+\int_{(0,\,u]}h_t(u-y){\rm
d}_yW^{(t)}(y)\nonumber\\&=&\int_{(0,\,u]}W^{(t)}(y){\rm
d}_y\big(-h_t(u-y)\big).
\end{eqnarray*}
It suffices to show that\footnote{Although $W^{(t)}$ and
$W_\alpha$ are not necessarily defined on a common probability
space we can assume that by virtue of Skorohod's representation
theorem.},
\begin{equation}\label{22}
\int_{(0,\,u]}\big(W^{(t)}(y)-W_\alpha(y)\big){\rm
d}_y\big(-h_t(u-y)\big) \ \tp \ 0, \ \ t\to\infty,
\end{equation}
in $D$ under the $J_1$ topology in cases (A1) and (A2) and under
the $M_1$ topology in case (A3), and
\begin{equation}\label{23}
\int_{(0,\,u]} W_\alpha(y){\rm d}_y\big(-h_t(u-y)\big) \
\Rightarrow \ \int_{(0,\,u]} W_\alpha(y){\rm
d}_y\big(-(u-y)^\beta\big)=Y_{\alpha,\,\beta}(u), \ \ t\to\infty.
\end{equation}
The convergence of finite dimensional distributions in \eqref{23}
holds by Lemma \ref{impo} and the continuous mapping theorem (see
the proof for case (A4) for more details). Therefore, as far as
relation \eqref{23} is concerned we only have to prove the
tightness.

\noindent {\sc Cases (A1) and (A2)}. If $\lit x_t=x$ in $D$ under
the $J_1$ topology and $x$ is continuous then, for any $T>0$,
$\lit \underset{u\in [0,\,T]}{\sup}\,|x_t(u)-x(u)|=0$. Hence,
using the monotonicity of $h_t$ we obtain
$$\underset{u\in [0,\,T]}{\sup}\,
\bigg|\int_{(0,\,u]}\big(x_t(y)-x(y)\big){\rm
d}_y\big(-h_t(u-y)\big)\bigg|\leq \underset{u\in
[0,\,T]}{\sup}\,|x_t(u)-x(u)|h_t(T) \ \to \ 0, \ \ t\to\infty.$$
Since $\big(W_2(u)\big)$ is a Brownian motion which has a.s.
continuous paths \eqref{22} follows by the continuous mapping
theorem.

By Lemma \ref{co} (b), for each $t>0$, the process on the
left-hand side of \eqref{23} has a.s. continuous paths. Therefore
we will prove that the convergence in \eqref{23} takes place under
the uniform topology in $C[0,\infty)$ which is more than was
claimed in \eqref{23}. To this end, it suffices to show that the
mentioned convergence holds in $C[0,\,T]$, for any $T>0$. We can
write, for any $T>0$,
\begin{eqnarray}\label{xyx}
\int_{(0,\,Tu]}W_2(y){\rm
d}_y\big(-h_t(Tu-y)\big)&=&{h^\ast(Tt)\over
h^\ast(t)}\int_{(0,\,u]}W_2(Ty){\rm
d}_y\big(-h_{Tt}(u-y)\big)\nonumber\\&\overset{\eqref{parts}}{=}&
{h^\ast(Tt)\over h^\ast(t)}\int_{(0,\,u]}h_{Tt}(u-y){\rm
d}W_2(Ty)\nonumber\\&=:&\widehat{X}_t(u), \ \ u\in [0,1].
\end{eqnarray}
Hence it remains to check the tightness of
$\big(\widehat{X}_t(u)\big)$ in $C[0,1]$. With $u,v\in [0,1]$,
$u>v$
\begin{eqnarray*}
&&\bigg({h^\ast(t)\over
h^\ast(Tt)}\bigg)^4\me\big(\widehat{X}_t(u)-\widehat{X}_t(v)\big)^4=
\me \bigg(\int_{[0,\,v]}\big(h_{Tt}(u-y)-h_{Tt}(v-y)\big){\rm
d}_yW_2(Ty)\\&+& \int_{[v,\,u]}h_{Tt}(u-y){\rm d}_yW_2(Ty)
\bigg)^4=
\me\bigg(\int_{[0,\,v]}\big(h_{Tt}(u-y)-h_{Tt}(v-y)\big){\rm
d}_yW_2(Ty)\bigg)^4\\&+& 6\me
\bigg(\int_{[0,\,v]}\big(h_{Tt}(u-y)-h_{Tt}(v-y)\big){\rm
d}_yW_2(Ty)\bigg)^2\me\bigg(\int_{[v,\,u]}h_{Tt}(u-y){\rm
d}_yW_2(Ty)\bigg)^2\\ &+&\me\bigg(\int_{[v,\,u]}h_{Tt}(u-y){\rm
d}_yW_2(Ty)\bigg)^4=3T^2\bigg(\bigg(\int_{[0,\,v]}\big(h_{Tt}(u-y)-h_{Tt}(v-y)\big)^2{\rm
d}y\bigg)^2\\
&+&2\int_{[0,\,v]}\big(h_{Tt}(u-y)-h_{Tt}(v-y)\big)^2{\rm
d}y\int_{[v,\,u]}h^2_{Tt}(u-y){\rm d}y+
\bigg(\int_{[v,\,u]}h^2_{Tt}(u-y){\rm
d}y\bigg)^2\bigg)\\&=&3T^2\bigg(\int_{[0,\,v]}\big(h_{Tt}(u-y)-h_{Tt}(v-y)\big)^2{\rm
d}y+\int_{[v,\,u]}h^2_{Tt}(u-y){\rm d}y\bigg)^2\\&=&
3T^2\bigg(\int_{[v,\,u]}h_{Tt}^2(y){\rm
d}y-2\int_{[0,\,v]}h_{Tt}(v-y)\big(h_{Tt}(u-y)-h_{Tt}(v-y)\big)\bigg)^2\\&\leq&
3T^2\bigg(\int_{[v,\,u]}h_{Tt}^2(y){\rm d}y\bigg)^2.
\end{eqnarray*}
Here the second equality follows since $\big(W_2(u)\big)$ has
independent increments, and the moments of odd orders of the
integrals involved equal zero. The third equality is a consequence
of \eqref{mo}. The last inequality is explained by the fact that
the functions $h_{Tt}$ are nonnegative and nondecreasing.

Hence when $Tt\geq t_0$ and $T\geq t_0$ we have
\begin{eqnarray*}
\me\big(\widehat{X}_t(u)-\widehat{X}_t(v)\big)^4 &\leq& 3T^2
{h^{\ast 4}(Tt)\over h^{\ast 4}(t)}\bigg(\int_{[v,\,u]}h_{Tt}^
2(y){\rm d}y\bigg)^2\overset{\eqref{tech}}{\leq} 3T^2C^4(T) A
\bigg(\int_{[v,\,u]}y^{2\beta-\delta}{\rm
d}y\bigg)^2\\&=&{3T^2C^4(T)A\over
(2\beta-\delta+1)^2}\big(u^{2\beta-\delta+1}-v^{2\beta-\delta+1}\big)^2.
\end{eqnarray*}
If $u<v$ the same inequality holds. Hence, the required tightness
follows by formula (12.51) and Theorem 12.3 in \cite{Bil}.

\noindent {\sc Proof of \eqref{22} for case (A3)}. We first note
that the functions $h_t$ are absolutely continuous with densities
$$h_t^\prime(y)={t\big(h^\ast(ty)-e^{-ty}\int_{[0,\,ty]}h^\ast(x)e^x{\rm d}x\big)\over h^\ast(t)}.$$
The renewal process $N$ has only unit jumps. Hence, $\lit
J(W^{(t)})=0$ a.s., where $J(\cdot)$ denotes the maximum-jump
functional defined in \eqref{ma}. Since $W^{(t)}
\overset{M_1}{\Rightarrow} W_\alpha$, $t\to\infty$, an appeal to
Lemma \ref{impo1} and the continuous mapping theorem completes the
proof.

Before turning to the proof of \eqref{23} in case (A3) let us
recall the following. The process $\big(W_\alpha(u)\big)$ has no
positive jumps, equivalently, the L\'{e}vy measure of
$W_\alpha(1)$ is concentrated on the negative halfline. Therefore,
it follows from Theorem 25.3 in \cite{Sato} and the fact that the
function $x\to (x\vee 1)^\gamma$, $\gamma>0$ is submultiplicative,
that the power moments of all positive orders of $W_\alpha^+(1)$
are finite\footnote{Moreover, the exponential moments of all
positive orders of $W_\alpha(1)$ are finite.}. Also it is
well-known that
\begin{equation}\label{asy}
\mmp\{W_\alpha(1)<-x\} \ \sim \ {\rm const}\,x^{-\alpha}, \ \
x\to\infty.
\end{equation}
For a formal proof one can use the explicit form of characteristic
function of $W_\alpha(1)$, Theorem 8.1.10 in \cite{BGT} and the
fact that the right tail of the law of $W_\alpha(1)$ is very light
(in particular, it is clearly dominated by the left tail).

\noindent {\sc Proof of \eqref{23} for case (A3)}. We will prove
that the convergence in \eqref{23} takes place in $D$ under the
$M_1$ topology. To this end, it suffices to show that the
mentioned convergence holds in $D[0,\,T]$, for any $T>0$. Define
$\widehat{X}_t(u)$ as in \eqref{xyx} but using $W_\alpha$ instead
of $W_2$. Then the task reduces to proving the tightness of the so
defined $\big(\widehat{X}_t(u)\big)$ in $D[0,1]$. By Theorem 1 in
\cite{Taq}, the required tightness will follow once we have proved
that
\begin{equation}\label{ta}
\mmp\{M\big(\widehat{X}_t(u_1), \widehat{X}_t(u),
\widehat{X}_t(u_2)\big)>\varepsilon\}\leq
L\varepsilon^{-\nu}(u_2-u_1)^{1+\rho}, \ \ 0\leq u_1\leq u\leq
u_2\leq 1,
\end{equation}
for large enough $t$ and some positive constants $L$, $\nu$ and
$\rho$, where for $x_1, x_2, x_2\in \mr$ $M(x_1,x_2,x_3):=0$ if
$x_2\in [x_1\wedge x_3, x_1\vee x_3]$, and $:=|x_2-x_1|\wedge
|x_3-x_2|$, otherwise.

We have
\begin{eqnarray*}
&& \mmp\big\{M\big(\widehat{X}_t(u_1), \widehat{X}_t(u),
\widehat{X}_t(u_2)\big)>\varepsilon\big\}\\&=&
\mmp\big\{\big|\widehat{X}_t(u_1)-\widehat{X}_t(u)\big|>\varepsilon,
\big|\widehat{X}_t(u_2)-\widehat{X}_t(u)\big|>\varepsilon,
\widehat{X}_t(u)<\widehat{X}_t(u_1)\wedge
\widehat{X}_t(u_2)\big\}\\&+&
\mmp\big\{\big|\widehat{X}_t(u_1)-\widehat{X}_t(u)\big|>\varepsilon,
\big|\widehat{X}_t(u_2)-\widehat{X}_t(u)\big|>\varepsilon,
\widehat{X}_t(u)>\widehat{X}_t(u_1)\vee
\widehat{X}_t(u_2)\big\}\\&+& \mmp\big\{\widehat{X}_t(u_1)\wedge
\widehat{X}_t(u_2)>\widehat{X}_t(u)+\varepsilon\big\}+\mmp\big\{\widehat{X}_t(u_1)\vee
\widehat{X}_t(u_2)<\widehat{X}_t(u)-\varepsilon\big\}\\&=&
\mmp\big\{\widehat{X}_t(u)-\widehat{X}_t(u_1)>\varepsilon,
\widehat{X}_t(u_2)-\widehat{X}_t(u)<-\varepsilon
\big\}\\&+&\mmp\big\{\widehat{X}_t(u)-\widehat{X}_t(u_1)<-\varepsilon,
\widehat{X}_t(u_2)-\widehat{X}_t(u)>\varepsilon
\big\}\\&=:&I_t(u_1, u, u_2)+J_t(u_1,u,u_2).
\end{eqnarray*}

Using \eqref{xyx} and formula \eqref{char} with characteristic
function of $W_\alpha(1)$ given by \eqref{st1} we arrive at the
distributional equality
\begin{eqnarray*}
{h^\ast(t)\over
h^\ast(Tt)}\big(\widehat{X}_t(u)-\widehat{X}_t(u_1)\big)&=&\int_{(0,\,u_1]}
\big(h_{Tt}(u-y)-h_{Tt}(u_1-y)\big){\rm
d}W_\alpha(Ty)\\&+&\int_{(u_1,\, u]}h_{Tt}(u-y){\rm
d}W_\alpha(Ty)\\&\od&
T^\alpha\bigg(W_\alpha(1)\bigg(\int_{(0,\,u_1]}
\big(h_{Tt}(u-y)-h_{Tt}(u_1-y)\big)^\alpha{\rm
d}y\bigg)^{1/\alpha}\\&+&W_\alpha^\prime(1)\bigg(\int_{(u_1,\,u]}
h^\alpha_{Tt}(u-y){\rm d}y\bigg)^{1/\alpha}\bigg)\\&\od& T^\alpha
W_\alpha(1)\bigg(\int_{(0,\,u_1]}
\big(h_{Tt}(u-y)-h_{Tt}(u_1-y)\big)^\alpha{\rm d}y\\&+&
\int_{(u_1,\,u]} h^\alpha_{Tt}(u-y){\rm d}y\bigg)^{1/\alpha}\\&=:&
T^\alpha W_\alpha(1)a_t(u_1, u),
\end{eqnarray*}
where $W_\alpha^\prime(1)$ and $W_\alpha(1)$ are i.i.d. Similarly
\begin{eqnarray*}
{h^\ast(t)\over
h^\ast(Tt)}\big(\widehat{X}_t(u_2)-\widehat{X}_t(u)\big)&=&\int_{(0,\,u]}
\big(h_{Tt}(u_2-y)-h_{Tt}(u-y)\big){\rm
d}W_\alpha(Ty)\\&+&\int_{(u,\, u_2]}h_{Tt}(u_2-y){\rm
d}W_\alpha(Ty)\\&\od&
T^\alpha\bigg(W_\alpha(1)\bigg(\int_{(0,\,u]}
\big(h_{Tt}(u_2-y)-h_{Tt}(u-y)\big)^\alpha{\rm
d}y\bigg)^{1/\alpha}\\&+&W_\alpha^\ast(1)\bigg(\int_{(u,\,u_2]}
h^\alpha_{Tt}(u_2-y){\rm d}y\bigg)^{1/\alpha}\bigg)\\&=:& T^\alpha
\big(W_\alpha(1)b_t(u,u_2)+W_\alpha^\ast(1)c_t(u,u_2)\big),
\end{eqnarray*}
where $W_\alpha(1)$ and $W_\alpha^\ast(1)$ are i.i.d. and
$W_\alpha(1)$ is the same as in the previous display.

Using the second inequality in \eqref{tech} and setting
$D(T):=T^\alpha C(T)$ we obtain, for large enough $t$,
\begin{eqnarray*}
I_t(u_1,u,u_2)&=&\mmp\bigg\{T^\alpha {h^\ast(Tt)\over h^\ast(t)}
W_\alpha(1)a_t(u_1,u)>\varepsilon, T^\alpha{h^\ast(Tt)\over
h^\ast(t)}\big(W_\alpha(1)b_t(u,u_2)+W_\alpha^\ast(1)c_t(u,u_2)\big)<-\varepsilon
\bigg\}\\&\leq& \mmp\bigg\{W_\alpha(1)>(a_t(u_1,u)D(T))^{-1}
\varepsilon, W_\alpha(1)b_t(u,u_2)+W_\alpha^\ast(1)c_t(u,u_2)<
-D^{-1}(T)\varepsilon \bigg\}\\&\leq&
\mmp\big\{W_\alpha(1)>(a_t(u_1,u)D(T))^{-1}
\varepsilon\big\}\mmp\bigg\{W_\alpha^\ast(1)c_t(u,u_2)<
-D^{-1}(T)\varepsilon\bigg(1+{b_t(u,u_2)\over
a_t(u_1,u)}\bigg)\bigg\}\\&\leq&
\mmp\big\{W_\alpha(1)>(a_t(u_1,u)D(T))^{-1}
\varepsilon\big\}\mmp\big\{W_\alpha^\ast(1)<-(c_t(u,u_2)D(T))^{-1}\varepsilon
\big\}.
\end{eqnarray*}
In view of \eqref{asy}, there exists a positive constant
$q=q(\varepsilon)$ such that
\begin{equation}\label{ine}
\mmp\{W_\alpha^\ast(1)<-x\}\leq qx^{-\alpha},
\end{equation}
whenever $x\geq \varepsilon
(\alpha\beta-\delta+1)^{1/\alpha}(A^{1/\alpha}D(T))^{-1}$ (the
constants $A$ and $\delta$ were defined in the paragraph that
contains formula \eqref{tech}). Further, for large enough $t$,
$$c^\alpha_t(u,u_2)=\int_{[0,\,u_2-u)}h^\alpha_{Tt}(y){\rm d}y\overset{\eqref{tech}}{\leq}A\int_{[0,\,u_2-u]}y^{\alpha\beta-\delta}{\rm d}y={A\over \alpha\beta-\delta+1}
(u_2-u)^{\alpha\beta-\delta+1}\leq {A\over
\alpha\beta-\delta+1}.$$ In view of this inequality \eqref{ine}
can be applied to estimate
\begin{eqnarray*}
I_t(u_1,u,u_2)&\leq&
\mmp\big\{W_\alpha^\ast(1)<-(c_t(u,u_2)D(T))^{-1}\varepsilon \big\}\\
&\leq& qD^\alpha(T)\varepsilon^{-\alpha}c_t^\alpha(u,u_2)\leq
{AqD^\alpha(T)\over
\alpha\beta-\delta+1}\varepsilon^{-\alpha}(u_2-u_1)^{\alpha\beta-\delta+1}.
\end{eqnarray*}
When $\beta>0$ this crude bound suffices. When $\beta=0$ we need a
more refined estimate for
$\mmp\big\{W_\alpha(1)>(a_t(u_1,u)D(T))^{-1} \varepsilon\big\}$.
To this end, we first work towards estimating $a_t(u_1,u)$. Since
$\alpha>1$ and $1-\delta\in (0,1)$,
$$(x+y)^\alpha\geq x^\alpha+y^\alpha \ \ \text{and} \ \ (x+y)^{1-\delta}\leq
x^{1-\delta}+y^{1-\delta} \ \ \text{for all} \ x,y\geq 0.$$Hence
\begin{eqnarray}
a^\alpha_t(u_1,u)&\leq&
\int_{(0,\,u_1]}\big(h^\alpha_{Tt}(u-y)-h^\alpha_{Tt}(u_1-y)\big){\rm
d}y+\int_{(u_1,\,u]}h^\alpha_{Tt}(u-y){\rm
d}y\nonumber\\&=&\int_{(u_1,\,u]}h^\alpha_{Tt}(y){\rm
d}y\overset{\eqref{tech}}{\leq}A\int_{(u_1,u]}y^{-\delta}{\rm
d}y\leq {A\over
1-\delta}\big(u^{1-\delta}-u_1^{1-\delta}\big)\nonumber\\&\leq&
{A\over 1-\delta}(u-u_1)^{1-\delta}\leq {A\over
1-\delta}(u_2-u_1)^{1-\delta}.\label{au}
\end{eqnarray}
Using \eqref{au} and Markov inequality we conclude that
\begin{eqnarray*}
\mmp\big\{W_\alpha(1)>(a_t(u_1,u)D(T))^{-1}
\varepsilon\big\}&\leq& \me (W_\alpha^+(1))^\alpha
D^\alpha(T)\varepsilon^{-\alpha}a^\alpha_t(u_1,u)\\&\leq& {\me
(W_\alpha^+(1))^\alpha AD^\alpha(T)\over
1-\delta}\varepsilon^{-\alpha}(u_2-u_1)^{1-\delta}.
\end{eqnarray*}
Combining pieces together leads to the inequality
$$I_t(u_1,u,u_2)\leq {\me
(W_\alpha^+(1))^\alpha q A^2D^{2\alpha}(T)\over
(1-\delta)^2}\varepsilon^{-2\alpha}(u_2-u_1)^{2(1-\delta)},$$
which holds for large enough $t$ in the case $\beta=0$ and serves
our needs as $1-\delta>1/2$. Starting with a trivial estimate
$$J_t(u_1,u,u_2)\leq
\mmp\big\{W_\alpha^\ast(1)>(a_t(u_1,u)D(T))^{-1}
\varepsilon\big\}\mmp\big\{W_\alpha(1)<-(c_t(u,u_2)D(T))^{-1}\varepsilon
\big\},$$ we observe that $J_t(u_1,u,u_2)$ is bounded from above
by the same quantity as $I_t(u_1,u,u_2)$. Summarizing we have
proved that \eqref{ta} holds with $L={2\me (W_\alpha^+(1))^\alpha
q A^2D^{2\alpha}(T)\over (1-\delta)^2}$, $\nu=2\alpha$ and
$\rho=1-2\delta $ when $\beta=0$ and with $L={2AqD^\alpha(T)\over
\alpha\beta-\delta+1}$, $\nu=\alpha$ and $\rho=\alpha\beta-\delta$
when $\beta>0$.

\noindent {\sc Case (A4)}. We first note that the functional limit
theorem $$V^{(t)}(u):=\mmp\{\xi>t\}N(ut) \
\overset{J_1}{\Rightarrow} \ V_\alpha(u), \ \ t\to\infty,$$ was
proved in Corollary 3.7 in \cite{Meer}.

We could have proceeded as above, by checking \eqref{22} and
\eqref{23}. However, in the present case the situation is much
easier. Indeed, for each $t>0$, the process
$\big(X^\ast_t(u)\big)$ defined by
$$X^\ast_t(u)=\int_{(0,\,u]}V^{(t)}(y){\rm d}_y\big(-h_t(u-y)\big), \ \ u\geq 0,$$ has nondecreasing paths (as the
convolution of two nondecreasing functions). Recall further that
$\big(V_\alpha(u)\big)$ is a generalized inverse function of a
stable subordinator. Since the paths of the latter are
right-continuous and strictly increasing,
$\big(Z_{\alpha,\,0}(u)\big):=\big(V_\alpha(u)\big)$ has
continuous and nondecreasing sample paths. If $\beta>0$,
$\big(Z_{\alpha,\,\beta}(u)\big)$ has continuous paths by Lemma
\ref{co}. By Theorem 3 in \cite{Bing71} the desired functional
limit theorem will follow once we have established the convergence
of finite dimensional distributions.

We will only investigate the two-dimensional convergence. The
other cases can be treated similarly. Since $X^\ast_t(0)=0$ a.s.,
we only have to prove that, for fixed $0<u<v<\infty$ and any
$\alpha_1, \alpha_2\in\mr$,
\begin{equation}\label{12}
\alpha_1 X^\ast_t(u)+\alpha_2 X^\ast_t(v) \ \dod \
\alpha_1Z_{\alpha,\,\beta}(u)+\alpha_2Z_{\alpha,\,\beta}(v), \ \
t\to\infty.
\end{equation}

For fixed $w>0$ and each $t>0$, define measures $\nu_{t,w}$ and
$\nu_w$ on $[0,w]$ by
$$\nu_{t,w}(c,d]:={h^\ast(t(w-c))-h^\ast(t(w-d))\over h^\ast(t)}, \ \ 0\leq
c<d\leq w$$ and
$$\nu_w(c,d]:=(w-c)^\beta-(w-d)^\beta, \ \ 0\leq c<d\leq w,$$ where
$\beta$ is assumed positive.

\noindent {\sc Case $\beta>0$}. As $t\to\infty$, the measures
$\nu_{t,w}$ weakly converge on $[0,w]$ to $\nu_w$. Now relation
\eqref{12} follows immediately from the equality
\begin{equation}\label{123}
\alpha_1 X^\ast_t(u)+\alpha_2
X^\ast_t(v)=\int_{(0,\,u]}V^{(t)}(y)\big(\alpha_1\nu_{t,u}({\rm
d}y)+\alpha_2 \nu_{t,v}({\rm d}y)\big)+\alpha_2
\int_{(u,\,v]}V^{(t)}(y)\nu_{t,v}({\rm d}y),
\end{equation}
Lemma \ref{impo} and the continuous mapping theorem.

\noindent {\sc Case $\beta=0$}. Now, as $t\to\infty$, the measures
$\nu_{t,w}$ weakly converge on $[0,w]$ to $\delta_w$ (delta
measure). Using \eqref{123} and arguing as before we arrive at
\eqref{12}.

\section{Extension to $h$'s defined on $\mr$}\label{who}

Let $h: \mr\to \mr$ be a right-continuous function with finite
limits from the left. Unlike the situation considered in the
previous sections the corresponding shot noise process is not
necessarily well-defined. However we do not investigate the a.s.
finiteness of $X(t)$ in the most general situation. Rather we
prove that under appropriate assumptions on $h$ which cover most
of practically interesting cases the result of Theorem \ref{main2}
continues to hold.
\begin{thm}\label{main3}
Let $h:\mr \to\mr$ be a right-continuous function with finite
limits from the left such that
$$h(x) \ \sim \ x^\beta \ell^\ast(x), \ \ x \to \infty,$$ for some
$\beta\in [0,\infty)$ and some $\ell^\ast$ slowly varying at
$\infty$. Assume also that $h$ is nondecreasing in the
neighborhood of $+\infty$, and nondecreasing and integrable in the
neighborhood of $-\infty$. Then the result of Theorem \ref{main2}
is valid.
\end{thm}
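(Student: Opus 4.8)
The plan is to reduce Theorem \ref{main3} to Theorem \ref{main2} by splitting the response function into the part supported on the positive halfline and the part coming from negative arguments, and then showing that the negative part contributes nothing to the limit after the prescribed normalization. First I would decompose $h = h_+ + h_-$, where $h_+(x) := h(x)1_{\{x \geq 0\}}$ and $h_-(x) := h(x)1_{\{x < 0\}}$, and correspondingly write $X(t) = X_+(t) + X_-(t)$ with
$$X_+(t) := \int_{[0,\,t]} h_+(t-y){\rm d}N(y), \qquad X_-(t) := \int_{[0,\,t]} h_-(t-y){\rm d}N(y).$$
Since $h_+$ is right-continuous, eventually nondecreasing, locally bounded, and satisfies $h_+(x) \sim x^\beta\ell^\ast(x)$ as $x\to\infty$, it meets every hypothesis of Theorem \ref{main2}. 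Hence $X_+(ut)$, under exactly the centering and normalization stated in cases (A1)--(A4), converges weakly to the respective limit process $Y_{\alpha,\,\beta}$ or $Z_{\alpha,\,\beta}$.

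\medskip

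\noindent The heart of the argument is then to prove that the contribution of $X_-$ is asymptotically negligible. Note that for $0 \le u \le T$ the term $X_-(ut) = \int_{[0,\,ut]} h_-(ut-y){\rm d}N(y)$ only picks up those renewal epochs $S_k$ with $ut - S_k < 0$, i.e.\ $S_k > ut$; by right-continuity of $h_-$ and the convention $N(t)=0$ for $t<0$ this means $X_-(ut)$ is in fact a finite sum over the (a.s.\ finitely many) epochs in a fixed neighborhood determined by where $h_-$ is supported together with integrability near $-\infty$. The integrability hypothesis $\int_{-\infty}^0 |h(x)|{\rm d}x < \infty$, combined with $h_-$ being nondecreasing (hence of bounded variation) near $-\infty$, guarantees $\oma |X_-(ut)| $ is tight in $t$. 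The key step is to show that, for any $T>0$,
$$\frac{\underset{u\in[0,\,T]}{\sup}\,|X_-(ut)|}{b(t)h(t)} \ \tp \ 0, \qquad t\to\infty,$$
in cases (A1)--(A3), and the analogous statement with normalization $\mmp\{\xi>t\}/h(t)$ in case (A4). Since $b(t)h(t)\to\infty$ (respectively $h(t)/\mmp\{\xi>t\}\to\infty$) while the numerator is $O_P(1)$ uniformly on $[0,T]$ by the bounded-variation and integrability control, this supremum tends to zero in probability. I would also verify that replacing the centering $\int_{[0,\,ut]}h(y){\rm d}y$ by $\int_{[0,\,ut]}h_+(y){\rm d}y$ changes the centering only by $\int_{[0,\,ut]}h_-(y){\rm d}y$, a bounded quantity which vanishes after division by $b(t)h(t)$, exactly as in \eqref{aux1}.

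\medskip

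\noindent Granting the two displays above, the conclusion follows by a routine application of Slutsky's theorem in $D$ under the relevant topology: $X(ut) = X_+(ut) + X_-(ut)$, the first summand converges weakly to the stated limit after normalization, and the second summand converges to zero uniformly on compacts in probability, so the sum converges to the same limit. Because convergence in probability to a constant (here the zero function) combines with weak convergence additively under both the $J_1$ and $M_1$ topologies, the topology in each case is inherited directly from Theorem \ref{main2}. The main obstacle I anticipate is the uniform-in-$u$ control of $\sup_{u\in[0,T]}|X_-(ut)|$: one must argue carefully that the monotonicity and integrability of $h$ near $-\infty$ force $X_-$ to remain bounded in probability even as the window $[0,ut]$ grows, and that no resonance with the renewal epochs can produce an unbounded contribution. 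This is where the bounded-variation structure of $h_-$ (allowing an integration-by-parts bound in terms of $\int_{-\infty}^0|h|$ and the local increments of $N$, controlled via Lemma \ref{diff} as in \eqref{aux4}) does the decisive work.
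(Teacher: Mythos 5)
Your overall reduction --- handle the restriction of $h$ to $[0,\infty)$ by Theorem \ref{main2} and show that the contribution of the epochs $S_k>ut$ is negligible after normalization --- is exactly the paper's strategy (note only that your $X_-(t):=\int_{[0,t]}h_-(t-y)\,{\rm d}N(y)$ is identically zero as written; the object you mean is $\sum_{k\ge 0}h(ut-S_k)1_{\{S_k>ut\}}$). The gap is that you do not actually prove the negligibility, and the two concrete claims you make about it are false. First, this contribution is \emph{not} "a finite sum over a.s.\ finitely many epochs in a fixed neighborhood": $h$ is only assumed integrable and monotone near $-\infty$, not compactly supported, so the sum runs over \emph{all} infinitely many epochs beyond $ut$, and even its a.s.\ finiteness and moment bounds are nontrivial (the paper gets them from Theorem 3.7 of \cite{AIM}). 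Second, $\sup_{u\in[0,T]}|X_-(ut)|$ is \emph{not} $O_P(1)$. Setting $\widetilde h(s):=h(-s)$ and $\tau_j:=\sum_{k\ge 0}\widetilde h(S_{k+1+j}-S_j)$, the paper observes that $u\mapsto S_{N(u)+k}-u$ attains its local minima at the points $S_j$, whence the supremum over $u\in[0,Tt]$ equals $\max_{1\le j\le N(Tt)-1}\tau_j$ --- a running maximum of order-$t$ many identically distributed, unbounded random variables (each $\tau_j$ dominates a count of renewals in a fixed window, which is unbounded). Such a maximum diverges in probability; what is true, and what suffices, is only that it is $o_P(t^c)$ for every $c>0$, which the paper extracts by a union bound plus Markov's inequality using $\me\tau_0^p<\infty$ with $pc>1$, and which is enough because every normalization in (A1)--(A4) is regularly varying with positive index.

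Your proposed mechanism for the uniform control --- an integration-by-parts bound in terms of $\int_{-\infty}^0|h|$ and Lemma \ref{diff} --- does not close this gap: Lemma \ref{diff} bounds increments of $N$ over windows of \emph{fixed} length and says nothing about the tail series over all epochs beyond $ut$. The two ingredients you are missing are precisely (i) the reduction of the continuous supremum in $u$ to a discrete maximum over the renewal epochs via the local-minima observation, and (ii) a moment bound for the stationary quantity $\tau_0=\sum_{k\ge 0}\widetilde h(S_{k+1})$, for which the integrability and monotonicity of $h$ near $-\infty$ enter through $\int_0^\infty \widetilde h^p(y)\,{\rm d}y<\infty$ and the cited result of \cite{AIM}. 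Your remark about adjusting the centering is vacuous, since $\int_{[0,ut]}h_-(y)\,{\rm d}y=0$; the centering in Theorem \ref{main2} only ever sees $h$ on $[0,\infty)$.
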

\begin{proof}
It suffices to prove that, for any $T>0$ and any $c>0$,
\begin{eqnarray*}
{\underset{u\in [0,\,T]}{\sup}\,\sum_{k\geq
0}h(ut-S_k)1_{\{S_k>ut\}}\over t^c}&=& {\underset{u\in
[0,\,Tt]}{\sup}\,\sum_{k\geq 0}h(u-S_k)1_{\{S_k>u\}}\over
t^c}\\&=& {\underset{u\in [0,\,Tt]}{\sup}\,\sum_{k\geq
0}h(u-S_{N(u)+k})\over t^c}  \ \tp \ 0, \ \ t\to\infty.
\end{eqnarray*}
As it was shown at the beginning of the proof of Theorem
\ref{main2}, without loss of generality, we can modify $h$ on any
finite interval in any way that would lead to a right-continuous
resulting function with finite limits from the left. In
particular, we will assume that $\widetilde{h}: [0,\infty)\to
[0,\infty)$ defined by $\widetilde{h}(t)=h(-t)$, $t\geq 0$, is
nonincreasing and $\widetilde{h}(0+)=1$. Note that the
integrability and monotonicity of $h$ in the vicinity of $-\infty$
entail $\lit \widetilde{h}(t)=0$.

The random function $u\to S_{N(u)+k}-u$ attains a.s. its local
minima $S_{k+1+j}-S_j$ at points $S_j$, $j\in\mn$. Hence
$$\underset{u\in [0,\,Tt]}{\sup}\,\sum_{k\geq
0}\widetilde{h}(S_{N(u)+k}-u)=\underset{1\leq j\leq
N(Tt)-1}{\sup}\,\sum_{k\geq
0}\widetilde{h}(S_{k+1+j}-S_j)=:\underset{1\leq j\leq
N(Tt)-1}{\sup}\,\tau_j.$$ Note that the sequence
$(\tau_j)_{j\in\mn_0}$ is stationary. Since
$\int_{[0,\,\infty)}\widetilde{h}(y){\rm d}y<\infty$ implies
$\int_{[0,\,\infty)}\widetilde{h}^p(y){\rm d}y<\infty$ for any
$p>1$, we conclude that $\me \tau_0^p<\infty$ for any $p>0$, by
Theorem 3.7 in \cite{AIM}.

By the weak law of large numbers, for any $\delta>0$, $\lit
\mmp\{N(Tt)>Tt(\mu^{-1}+\delta)\}=0$, where $\mu^{-1}$ is
interpreted as $0$ when $\mu=\infty$. Choose $p>0$ such that
$pc>1$. Then, for any $\varepsilon>0$, $$\mmp\big\{\underset{1\leq
j\leq [Tt(\mu^{-1}+\delta)]}{\sup}\,\tau_j
>\varepsilon t^c\big\}\leq \sum_{j=1}^{[Tt(\mu^{-1}+\delta)]}\mmp\{\tau_0>\varepsilon
t^c\}\leq [Tt(\mu^{-1}+\delta)]\varepsilon^{-p} t^{-pc}\me
\tau_0^p \ \to \ 0, \ \ t\to\infty,$$ by Markov inequality.
Therefore, as $t\to\infty$,
\begin{eqnarray*}
\mmp\{\underset{1\leq j\leq N(Tt)-1}{\sup}\,\tau_j >\varepsilon
t^c\}&\leq& \mmp\{\underset{1\leq j\leq
[Tt(\mu^{-1}+\delta)]}{\sup}\,\tau_j
>\varepsilon t^c\} +\mmp\{N(Tt)>Tt(\mu^{-1}+\delta)\} \ \to \ 0.
\end{eqnarray*}
The proof is complete.
\end{proof}

\section{Appendix}

\subsection{Probabilistic tools}

\begin{lemma}\label{diff}
For any $0\leq a<b$, any $T>0$ and any $c>0$
\begin{equation}\label{di}
{\underset{u\in [0,\,T]}{\sup}\,\big({N(ut-a)-N(ut-b)}\big)\over
t^c} \tp \ 0, \ \ t\to\infty.
\end{equation}
\end{lemma}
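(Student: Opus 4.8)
The plan is to reduce the statement to a quantitative estimate on the maximal number of renewal epochs contained in a sliding window of fixed length $b-a$, and then to control this maximum by a union bound combined with an exponential (Chernoff) estimate that requires no moment assumptions on $\xi$.

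First I would substitute $v=ut$ and rewrite the numerator as $M_t:=\underset{v\in[0,Tt]}{\sup}\,\big(N(v-a)-N(v-b)\big)$, noting that $N(v-a)-N(v-b)=\#\{k:\ v-b<S_k\le v-a\}$ counts the renewal epochs lying in a half-open window of length $L:=b-a$. Since $v\mapsto N(v-a)-N(v-b)$ is a right-continuous difference of nondecreasing step functions, its supremum is attained and equals the largest number of (necessarily consecutive) epochs $S_j,\ldots,S_{j+m-1}$ that fit into some interval of length $L$; moreover every counted epoch satisfies $S_k\le v-a\le Tt$ (here $a\ge0$ is used), so the starting index $j$ of such a block obeys $0\le j\le N(Tt)-m$. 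Hence, for integers $m\ge2$ and $K\ge1$,
$$\{M_t\ge m\}\cap\{N(Tt)\le K\}\ \subseteq\ \bigcup_{j=0}^{K}\{S_{j+m-1}-S_j\le L\}.$$

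Because the increments of $(S_n)$ are i.i.d., $S_{j+m-1}-S_j\od S_{m-1}$, and a union bound yields $\mmp\{M_t\ge m,\ N(Tt)\le K\}\le (K+1)\,\mmp\{S_{m-1}\le L\}$. Now I would invoke the Laplace-transform bound: since $\xi>0$ a.s. we have $\varphi(s):=\me e^{-s\xi}<1$ for every $s>0$, whence for any fixed $s>0$
$$\mmp\{S_{m-1}\le L\}\ \le\ e^{sL}\varphi(s)^{\,m-1}\ =:\ C\rho^{\,m},\qquad \rho:=\varphi(s)\in(0,1).$$
The same device controls $N(Tt)$ with no integrability hypothesis on $\xi$ (crucial, since $\mu=\me\xi$ may be infinite in cases (A3)--(A4)): as $\{N(Tt)>K\}=\{S_K\le Tt\}$, one gets $\mmp\{N(Tt)>K\}\le e^{sTt}\varphi(s)^{K}$, and choosing $s=1$, $\gamma:=-\log\varphi(1)>0$ and $K=K(t):=\lceil (2T/\gamma)\,t\rceil$ makes this bound at most $e^{Tt-\gamma K(t)}\le e^{-Tt}\to0$.

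Finally I would fix $\varepsilon>0$, take $m=m(t):=\lceil \varepsilon t^{c}\rceil\to\infty$, and split
$$\mmp\{M_t>\varepsilon t^{c}\}\ \le\ (K(t)+1)\,C\rho^{\,m(t)}+\mmp\{N(Tt)>K(t)\}.$$
The second term tends to $0$ by the previous step, while in the first term $K(t)+1=O(t)$ is only polynomial and $\rho^{\,m(t)}=e^{-|\log\rho|\,\varepsilon t^{c}}$ decays faster than any power of $t$ because $c>0$; hence the product vanishes. As $\varepsilon>0$ is arbitrary, $M_t/t^{c}\tp0$, which is \eqref{di}. The one point that genuinely requires care—and the only real obstacle—is the random upper limit $N(Tt)$ in the union bound; this is precisely what forces the splitting on the polynomially sized event $\{N(Tt)\le K(t)\}$, and it is the moment-free Chernoff estimate that lets the scheme run uniformly across all cases, including the heavy-tailed regimes where $\mu=\infty$.
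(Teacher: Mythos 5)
Your proof is correct, and it takes a genuinely different route from the paper's. The paper first splits off the range $ut<b$ (producing an extra $N(b-a)$ term), then reduces the continuous supremum $\sup_{u}\big(N(u+b-a)-N(u)\big)$ to a maximum over the renewal epochs $S_k$ by noting that the window count can only decrease as the left endpoint travels between consecutive epochs; it then controls the number of epochs via the weak law of large numbers ($N(Tt)\le Tt(\mu^{-1}+\delta)$ w.h.p., with $\mu^{-1}:=0$ when $\mu=\infty$) and invokes a cited result (Beljaev--Maksimov) for the exponential moments of $N(b-a)$, using the renewal property to get $N(S_k+b-a)-N(S_k)\od N(b-a)-1$. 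You instead dualize the event: a window count of at least $m$ forces a block of $m$ consecutive epochs with $S_{j+m-1}-S_j\le b-a$, which you kill with the elementary Chernoff bound $\mmp\{S_{m-1}\le L\}\le e^{sL}\varphi(s)^{m-1}$, $\varphi(s)=\me e^{-s\xi}<1$; and you replace the law of large numbers by the identity $\{N(Tt)>K\}=\{S_K\le Tt\}$ plus the same Laplace-transform bound, so the index range $K(t)=O(t)$ is obtained with no reference to $\mu$ at all. The two reductions are essentially equivalent in content (your block event and the paper's epoch-maximum both isolate $m$ renewals in an interval of length $b-a$), and both yield the same superpolynomial decay; what your version buys is self-containedness (no external citation for exponential moments of $N$) and a cleaner, uniformly moment-free treatment of the heavy-tailed cases, whereas the paper's formulation introduces the quantity $Z(t)$ and the epoch-maximum device that it reuses in the proof of Lemma 6.3. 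One small point to make explicit if you write this up: the block argument needs $m\ge 2$, which holds for large $t$ since $m(t)=\lceil\varepsilon t^{c}\rceil\to\infty$.
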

\begin{rem}
A perusal of the proof reveals that the rate of convergence to
zero in \eqref{di} is not optimal. However, the present form of
\eqref{di} serves our needs. In general, it seems very likely that
the actual rate of a.s. convergence in \eqref{di} is the same as
in Theorem 2 in \cite{Stei}. Note however that the cited result
assumed that $K_T\to\infty$ as $T\to\infty$ whereas we need
$K_T={\rm const}$.
\end{rem}
\begin{proof}
We start by writing
\begin{eqnarray*}
\underset{u\in
[0,\,T]}{\sup}\,\big({N(ut-a)-N(ut-b)}\big)&=&\underset{u\in
[0,\,T]}{\sup}\,\big({N(ut-a)-N(ut-b)}\big)1_{[b^{-1}t,\,
\infty)}(u)\\&+&\underset{u\in
[0,\,T]}{\sup}\,\big({N(ut-a)-N(ut-b)}\big)1_{[0,\,b^{-1}t)}(u)\\&=&
\underset{u\in [0,\,Tt-b]}{\sup}\,\big(N(u+b-a)-N(u)\big)\\&+&
\underset{u\in [0,\,T]}{\sup}\,N(ut-a)1_{[0,\,b^{-1}t)}(u)\\&\leq&
\underset{u\in [0,\,Tt-b]}{\sup}\,\big(N(u+b-a)-N(u)\big)+ N(b-a).
\end{eqnarray*}

To prove the equality
$$\underset{u\in [0,\,S_{N(Tt-b)-1}]}{\sup}\,\big(N(u+b-a)-N(u)\big)=
\underset{0\leq k\leq
N(Tt-b)-1}{\sup}\,\big(N(S_k+b-a)-N(S_k)\big)$$ just note that
obviously the right-hand side does not exceed the left-hand side,
and that while $u$ is traveling from $S_k$ to $S_{k-1}-$ the
numbers of $S_j$'s falling into the interval $(u,u+b-a]$ can only
decrease. In general, the following estimate holds true:
\begin{eqnarray*}
\underset{0\leq k\leq
N(Tt-b)-1}{\sup}\,\big(N(S_k+b-a)-N(S_k)\big)&\leq& \underset{u\in
[0,\,Tt-b]}{\sup}\,\big(N(u+b-a)-N(u)\big)\\&\leq& \underset{0\leq
k\leq N(Tt-b)}{\sup}\,\big(N(S_k+b-a)-N(S_k)\big)=:Z(t).
\end{eqnarray*}
A possible overestimate here is due to taking into account the
extra interval $(Tt-a, S_{N(Tt-b)}+b-a]$.

By the weak law of large numbers, for any $\delta>0$,
\begin{equation}\label{inter}
\lit\mmp\{N(Tt-b)>Tt(\mu^{-1}+\delta)\}=0,
\end{equation}
where we set $\mu^{-1}$ to equal zero if $\mu=\me\xi=\infty$. It
is known that $N(b-a)$ has exponential moments of all orders (see,
for instance, Theorem 2 in \cite{BelMaks}). Hence, for any
$\gamma>0$
$$\mmp\{N(b-a)>x\}=O(e^{-\gamma x}), \ \ x\to\infty.$$ Now we conclude that, for any $\varepsilon>0$,
\begin{eqnarray}\label{inter2}
&& \mmp\{\underset{0\leq k\leq
[Tt(\mu^{-1}+\delta)]}{\max}\,\big(N(S_k+b-a)-N(S_k)\big)>\varepsilon
t^c\}\nonumber\\&\leq&
\sum_{k=0}^{[Tt(\mu^{-1}+\delta)]}\mmp\{N(S_k+b-a)-N(S_k)>\varepsilon
t^c\}\nonumber\\&=&
([Tt(\mu^{-1}+\delta)]+1)\mmp\{N(b-a)-1>\varepsilon
t^c\}=O\big(t\exp(-\gamma\varepsilon t^c)\big)=o(1), \ \
t\to\infty.
\end{eqnarray}
Therefore, in view of \eqref{inter} and \eqref{inter2},
\begin{eqnarray*}
\mmp\{Z(t)>\varepsilon t^c\}&=&\mmp\{Z(t)>\varepsilon
t^c,\,N(Tt-b)>Tt(\mu^{-1}+\delta)\}\\&+& \mmp\{Z(t)>\varepsilon
t^c ,\,N(Tt-b)\leq Tt(\mu^{-1}+\delta)\}\\&\leq&
\mmp\{N(Tt-b)>Tt(\mu^{-1}+\delta)\}\\&+&\mmp\{\underset{0\leq
k\leq
[Tt(\mu^{-1}+\delta)]}{\max}\,\big(N(S_k+b-a)-N(S_k)\big)>\varepsilon
t^c\}\\&=& o(1), \ \ t\to\infty.
 \end{eqnarray*}
\end{proof}

\begin{lemma}\label{red}
Let $K_1, K_2: \mr^+\to\mr^+$ be nondecreasing functions such that
$K_1(t)\geq K_2(t)$, $t\in\mr^+$. Assume that 
$$\underset{t\to\infty}{\lim\sup}\,{K_1(t)+K_2(t)\over
\int_{[0,\,t]}\big(K_1(y)-K_2(y)\big){\rm d}y}\leq {\rm const}.$$
Then, for any $c>0$ and any $T>0$,
$${\underset{u\in [0,\,T]}{\sup}\,\int_{[0,\,ut]}\big(K_1(ut-y)-K_2(ut-y)\big){\rm d}N(y)\over t^c
\int_{[0,\,Tt]}\big(K_1(y)-K_2(y)\big){\rm d}y} \ \tp \ 0, \ \
t\to\infty.$$
\end{lemma}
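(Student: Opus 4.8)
The plan is to substitute $v=ut$ and reduce the statement to a supremum over $v$. Write $D:=K_1-K_2\geq 0$ and $G(v):=\int_{[0,\,v]}D(y){\rm d}y$, so that the denominator equals $t^cG(Tt)$, and put $\widetilde I(v):=\int_{[0,\,v]}D(v-y){\rm d}N(y)$. Since the integrand is nonnegative, $\widetilde I$ is nonnegative and $\underset{u\in[0,\,T]}{\sup}\int_{[0,\,ut]}D(ut-y){\rm d}N(y)=\underset{v\in[0,\,Tt]}{\sup}\,\widetilde I(v)$, so the goal becomes $(t^cG(Tt))^{-1}\underset{v\le Tt}{\sup}\,\widetilde I(v)\ \tp\ 0$. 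The one structural consequence of the hypothesis that I will use repeatedly is that, since $K_1,K_2$ are nondecreasing, $D(z)\le K_1(z)+K_2(z)\le CG(z)$ for all $z\ge z_0$ and some constant $C$.

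First I would dispose of the mean. With $U(y):=\me N(y)$ one has $\me\widetilde I(v)=\int_{[0,\,v]}D(v-y){\rm d}U(y)$. Partitioning $[0,v]$ into unit intervals and using the elementary renewal bound $U(j+1)-U(j)\le 1+U(1)=:C_0$ together with $\underset{z\in(a,\,a+1]}{\sup}D(z)\le K_1(a+1)-K_2(a)$ (valid because $K_1,K_2$ are monotone), the resulting sum telescopes and, through $K_2(v)\le CG(v)$, gives $\me\widetilde I(v)\le C'(1+G(v))$ uniformly in $v$. Hence $\underset{v\le Tt}{\sup}\,\me\widetilde I(v)\le C'(1+G(Tt))$, and the contribution of the mean to the normalized supremum is at most $C'(1+G(Tt))/(t^cG(Tt))\to 0$ deterministically. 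Note that this step uses only the general bound $U(j+1)-U(j)\le C_0$ and so requires no moment assumption on $\xi$.

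It remains to control the centered process $\widetilde I(v)-\me\widetilde I(v)=\int_{[0,\,v]}D(v-y){\rm d}(N-U)(y)$, and this is the crux. The naive estimate bounds $|\widetilde I(v)-\me\widetilde I(v)|$ by the total variation of $D$ on $[0,v]$ times $\underset{s\le Tt}{\sup}|N(s)-U(s)|$, i.e. by $CG(Tt)\cdot O_P(b(Tt))$, where $b$ is the renewal normalization of the case at hand. This throws away the cancellation inside $D=K_1-K_2$ and is too lossy: since $b$ is regularly varying of index $1/\alpha$ with $\alpha\in(1,2]$, it only yields $o_P(t^cG(Tt))$ for $c>1/\alpha$, whereas the lemma asks for every $c>0$. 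I would therefore retain the cancellation and appeal to second-order renewal theory (equivalently, to the functional limit theorem for $N$ already used in the main proof) to get the correct order: for fixed $v$ the centered integral has $L_2$-norm of order $\big(\int_{[0,\,v]}D^2\big)^{1/2}$ in cases (A1)--(A2) and $\alpha$-stable scale of order $\big(\int_{[0,\,v]}D^\alpha\big)^{1/\alpha}$ in case (A3). The key point is that $\int_{[0,\,v]}D^\alpha=\int_{[0,\,v]}D^{\alpha-1}D\le\big(\underset{z\le v}{\sup}D\big)^{\alpha-1}G(v)\le (CG(v))^{\alpha-1}G(v)=C^{\alpha-1}G(v)^\alpha$ (with $\alpha=2$ in (A1)--(A2)), so this scale is $\le C''G(Tt)$ for all $v\le Tt$.

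Finally I would pass from fixed $v$ to the supremum by discretization. Taking an equispaced grid $0=v_0<\cdots<v_n=Tt$ with $n=\lfloor t^\delta\rfloor$, a union bound over the grid points via Chebyshev's inequality in (A1)--(A2), or via the stable tail estimate $\mmp\{|\cdot|>x\}\le{\rm const}\,(G(v)/x)^\alpha$ in (A3), costs at most ${\rm const}\cdot n\,t^{-c\alpha}=t^{\delta-c\alpha}\to 0$ once $\delta<c\alpha$; the deterministic mean bound of the second paragraph is subtracted off first, so only the mean-zero part is tested against $\varepsilon t^cG(Tt)$. The oscillation of the centered process on each subinterval is handled by the same moment estimate applied to increments, whose scale now involves only the integral of $D^\alpha$ over a short range and is therefore controlled by taking $n$ large, the monotonicity of $N$ and of $K_1,K_2$ keeping these increment bounds tractable. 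Combining the two parts yields $\underset{v\le Tt}{\sup}\,\widetilde I(v)=o_P(t^cG(Tt))$. I expect the third paragraph to be the main obstacle: a crude total-variation bound fails for small $c$, and one genuinely needs the renewal fluctuation theory to see that $\int D(v-\cdot)\,{\rm d}(N-U)$ scales like $\big(\int D^\alpha\big)^{1/\alpha}$ rather than like $(\text{total variation})\times\underset{s\le Tt}{\sup}|N-U|$.
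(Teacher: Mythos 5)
Your second paragraph already contains, without your noticing it, essentially the whole of the paper's proof: the unit-interval decomposition, the bound $\sup_{z\in(j,\,j+1]}D(v-z)\le K_1(v-j)-K_2(v-j-1)$ from monotonicity, and the telescoping that turns $\sum_j\big(K_1(v-j)-K_2(v-j-1)\big)$ into $\int_{[0,\,v]}D(y)\,{\rm d}y+O\big(K_1(v)+K_2(v)\big)=O(G(v))$ via the hypothesis. The paper applies exactly this computation \emph{pathwise} to $N$ rather than to $U=\me N$: it bounds $\int_{[0,\,v]}D(v-y)\,{\rm d}N(y)\le \sup_{s\le Tt}\big(N(s+1)-N(s)\big)\cdot O(G(Tt))$ (plus a boundary term $K_1(1)(N(v)-N(v-1))$ for the last partial interval), and then invokes Lemma \ref{diff}, which says that $\sup_{s\le Tt}\big(N(s+1)-N(s)\big)=o_P(t^c)$ for \emph{every} $c>0$ because $N(1)$ has exponential moments and a union bound over the $O_P(t)$ renewal epochs costs only a factor $t$. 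That immediately gives the claim. Your assertion in paragraph three that a crude bound of the form $(\text{deterministic }O(G(Tt)))\times(\text{random }o_P(t^c))$ is unavailable for small $c$ and that ``one genuinely needs the renewal fluctuation theory'' is therefore the wrong turn: the lossy factor in the naive integration-by-parts bound is $\sup_{s\le Tt}|N(s)-U(s)|\asymp b(t)$, but the correct random factor to extract is the maximal \emph{unit-interval increment} of $N$, which is polylogarithmic, not polynomial. No centering and no second-order theory are needed.

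Beyond being unnecessary, your route has concrete gaps. The lemma is stated (and used, also in case (A4) where $\alpha\in(0,1)$ and $\mu=\infty$) for an arbitrary positive $\xi$ with no moment or regular-variation assumptions, whereas your paragraphs three and four are tied to cases (A1)--(A3) and to the normalization $b$. The uniform-in-$t$ estimates you invoke --- an $L_2$-bound of order $\big(\int D^2\big)^{1/2}$ for $\int D(v-\cdot)\,{\rm d}(N-U)$ in (A2) (where $\sigma^2=\infty$), and especially a tail bound $\mmp\{|\cdot|>x\}\le{\rm const}\,(G(v)/x)^\alpha$ for the \emph{pre-limit} renewal integral in (A3) --- are not available off the shelf; the latter is essentially the hard tightness estimate of the main theorem, so using it here is close to circular. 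Finally, the chaining step is incomplete: the increment $\widetilde I(v')-\widetilde I(v)$ contains $\int_{[0,\,v]}\big(D(v'-y)-D(v-y)\big)\,{\rm d}(N-U)(y)$, and since $K_1,K_2$ may jump, $\int_{[0,\,v]}|D(v'-y)-D(v-y)|^\alpha\,{\rm d}y$ need not be small for $v'-v$ small, so the oscillation on short subintervals is not controlled by ``the integral of $D^\alpha$ over a short range.'' If you replace $U(j+1)-U(j)\le C_0$ in your mean computation by $N(j+1)-N(j)\le\sup_{s\le Tt}\big(N(s+1)-N(s)\big)$ and quote Lemma \ref{diff}, the proof closes with no further work.
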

\begin{proof}
We use the decomposition
$$\int_{[0,\,t]}\big(K_1(t-y)-K_2(t-y)\big){\rm
d}N(y)=\int_{[0,\,[t]]}+\int_{[[t],\,t]}=:I_1(t)+I_2(t).$$

\noindent For $I_2(t)$ we have $$I_2(t)\leq
\int_{[[t],\,t]}K_1(t-y){\rm d}N(y)\leq
K_1(t-[t])\big(N(t)-N([t])\big)\leq K_1(1)\big(N(t)-N(t-1)\big).$$
Hence, by Lemma \ref{diff}, for any $T>0$, $$t^{-c}\underset{u\in
[0,\,T]}{\sup}\, I_2(ut) \ \tp \ 0, \ \ t\to\infty.$$ It remains
to consider $I_1(t)$:
\begin{eqnarray*}
I_1(t)&=&
K_1(t)-K_2(t)+\sum_{j=0}^{[t]-1}\int_{(j,\,j+1]}\big(K_1(t-y)-K_2(t-y)\big){\rm
d}N(y)\\&\leq&
K_1(t)-K_2(t)+\sum_{j=0}^{[t]-1}\big(K_1(t-j)-K_2(t-j-1)\big)\big(N(j+1)-N(j)\big)\\&\leq&
K_1(t)+ \underset{s\in
[0,\,[t]]}{\sup}\,\big(N(s+1)-N(s)\big)\sum_{j=0}^{[t]-1}\big(K_1(t-j)-K_2(t-j-1)\big)\\&\leq&
K_1(t)+\underset{s\in [0,\,[t]]}{\sup}\,\big(N(s+1)-N(s)\big)
\sum_{j=0}^{[t]-1}\big(K_1([t]+1-j)-K_2([t]-1-j)\big)\\&=&
\underset{s\in
[0,\,[t]]}{\sup}\,\big(N(s+1)-N(s)\big)\bigg(\int_{[2,\,[t]]}\big(K_1(y)-K_2(y)\big){\rm
d}y+O\big(K_1(t)+K_2(t)\big)\bigg).
\end{eqnarray*}
Hence, for any $T>0$, $$\underset{u\in [0,\,T]}{\sup}\,I_1(ut)\leq
\underset{u\in [0,\,
T]}{\sup}\,\big(N(ut+1)-N(ut)\big)\bigg(\int_{[2,\,[Tt]]}\big(K_1(y)-K_2(y)\big){\rm
d}y+O\big(K_1(Tt)+K_2(Tt)\big)\bigg),$$ and, by Lemma \ref{diff},
$${\underset{u\in [0,\,T]}{\sup}\,I_1(ut)\over t^c
\int_{[0,\,Tt]}\big(K_1(y)-K_2(y)\big){\rm d}y} \ \tp \ 0, \ \
t\to\infty.$$ The proof is complete.
\end{proof}

\subsection{Analytic tools}

\begin{lemma}\label{red1}
Let $f:\mr^+\to\mr^+$ be a nondecreasing function which varies
regularly at $\infty$ with index $\gamma\geq 0$, and $f(0)=0$. Let
$\theta$ be a random variable with finite power moments of all
positive orders whose absolutely continuous law is concentrated on
$\mr^+$. Then $f^\ast: \mr^+\to\mr^+$ defined by $f^\ast(t):=\me
f((t-\theta)^+)$ is a continuous function with $f^\ast(0)=0$ and
such that $f^\ast(t)\sim f(t)$, $t\to\infty$. In particular,
$f^\ast$ varies regularly at $\infty$ with index $\gamma$.
Furthermore,
$$\int_{[0,\,t]}\big(f(y)-f^\ast(y)\big){\rm d}y \ \sim \ \me \theta f(t), \ \ t\to\infty.$$
\end{lemma}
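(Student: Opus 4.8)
The plan is to work throughout from the representation $f^\ast(t)=\me f((t-\theta)^+)$ and to lean on two elementary facts: the monotonicity of $f$, which gives $f((t-\theta)^+)\le f(t)$ since $(t-\theta)^+\le t$, and the regular variation of $f$, which yields $\lit f(t-s)/f(t)=1$ for every fixed $s\ge 0$ (an instance of the uniform convergence theorem for regularly varying functions, see \cite{BGT}). The equality $f^\ast(0)=0$ is immediate, because $\theta\ge 0$ a.s. forces $(0-\theta)^+=0$ and $f(0)=0$. Each of the remaining assertions will then be obtained by passing a limit through the expectation $\me$, the only issue being the choice of a dominating function.

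First I would prove continuity. Fixing $t_0\ge 0$ and letting $t\to t_0$, the map $t\mapsto(t-\theta)^+$ is continuous for every value of $\theta$, and for all $\theta$ outside a countable (hence $\mmp$-null, by absolute continuity of the law of $\theta$) set the function $f$ is continuous at the point $(t_0-\theta)^+$, since a nondecreasing $f$ has at most countably many discontinuities. Thus $f((t-\theta)^+)\to f((t_0-\theta)^+)$ a.s., and for $t\le t_0+1$ the integrand is dominated by the constant $f(t_0+1)$; dominated convergence then gives $f^\ast(t)\to f^\ast(t_0)$.

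The equivalence $f^\ast(t)\sim f(t)$ follows the same pattern. Writing $f^\ast(t)/f(t)=\me\big(f((t-\theta)^+)/f(t)\big)$, the integrand lies in $[0,1]$ by monotonicity of $f$ and tends to $1$ a.s., because for fixed $\theta$ one has $t>\theta$ eventually and $f(t-\theta)/f(t)\to 1$. Dominated convergence with the constant majorant $1$ yields the limit $1$, and the equivalence then makes $f^\ast$ regularly varying of index $\gamma$.

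For the final statement the crucial observation is the pointwise-in-$\theta$ identity $\int_{[0,\,t]}\big(f(y)-f((y-\theta)^+)\big){\rm d}y=\int_{[(t-\theta)^+,\,t]}f(y){\rm d}y$, which I would verify by splitting the integral at $y=\theta$ and making the change of variable $u=y-\theta$, after which the two copies of $\int f$ telescope. Since the integrand is nonnegative, Tonelli's theorem gives $\int_{[0,\,t]}(f(y)-f^\ast(y)){\rm d}y=\me\int_{[(t-\theta)^+,\,t]}f(y){\rm d}y$. Dividing by $f(t)$ and substituting $y=t-s$ turns the inner integral into $\int_{[0,\,\min(\theta,t)]}\big(f(t-s)/f(t)\big){\rm d}s$, which is bounded above by $\theta$ (again because $f(t-s)\le f(t)$) and converges a.s. to $\theta$ by bounded convergence in $s$. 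A last application of dominated convergence, this time with the integrable majorant $\theta$, gives $\lit f(t)^{-1}\int_{[0,\,t]}(f(y)-f^\ast(y)){\rm d}y=\me\theta$, as required. I expect no deep analytic difficulty here; the only real care is in the bookkeeping of the telescoping identity and in noting that the hypothesis $\me\theta<\infty$ (guaranteed, since $\theta$ has finite moments of all positive orders) supplies exactly the integrable dominating function needed, the monotonicity of $f$ having already rendered every earlier domination trivial and letting me avoid Potter-type estimates altogether.
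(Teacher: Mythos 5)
Your proof is correct, and while it shares the paper's key structural step --- the interchange $\int_{[0,\,t]}\big(f(y)-f^\ast(y)\big)\,{\rm d}y=\me\int_{[(t-\theta)^+,\,t]}f(y)\,{\rm d}y$ via Tonelli, which is exactly the first display in the paper's own proof --- it diverges in two places. For continuity, the paper invokes the explicit exponential-smoothing representation $f^\ast(t)=f(0)e^{-t}+e^{-t}\int_{[0,\,t]}f(y)e^y\,{\rm d}y$, which in fact only covers the standard exponential $\theta$ used in the application; your argument (a.s. convergence of $f((t-\theta)^+)$ using the countability of the discontinuities of the monotone $f$ and absolute continuity of the law of $\theta$, followed by dominated convergence) proves continuity for the general $\theta$ of the statement, at the cost of yielding only continuity rather than the differentiability the paper notes in passing. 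For the final asymptotic, the paper splits $\me\int_{[(t-\theta)^+,\,t]}f$ into the contribution of $\{\theta>t\}$ (killed by Markov's inequality applied to a second moment of $\theta$) and of $\{\theta\le t\}$ (sandwiched between $\me\,\theta f(t-\theta)1_{\{\theta\le t\}}/f(t)$ and $\me\theta$, with the lower bound handled by dominated convergence); your single change of variables $y=t-s$ reduces everything to one application of dominated convergence with the integrable majorant $\theta$, which is cleaner and uses only $\me\theta<\infty$ rather than higher moments. One cosmetic remark: in the continuity step, when $\theta>t_0$ the argument $(t-\theta)^+$ is identically $0$ near $t_0$, so continuity of $f$ at $0$ is neither needed nor guaranteed there; your conclusion still holds because the integrand is locally constant, but the phrasing ``$f$ is continuous at $(t_0-\theta)^+$'' is slightly off in that case.
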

\begin{proof}
The fact $f^\ast(0)=0$ is trivial. The continuity (even
differentiability) of $f^\ast$ follows from the representation
$$f^\ast(t)=f(0)e^{-t}+e^{-t}\int_{[0,\,t]}f(y)e^y{\rm d}y.$$
By dominated convergence, $\lit f^\ast(t)/f(t)=1$. This entails
the regular variation of $f^\ast$. Further
\begin{equation*}
\int_{[0,\,t]}\big(f(y)-f^\ast(y)\big){\rm d}y=\me
\int_{[(t-\theta)^+,\,t]}f(y){\rm d}y=\int_{[0,\,t]}f(y){\rm
d}y\mmp\{\theta>t\}+\me1_{\{ \theta\leq
t\}}\int_{[t-\theta,\,t]}f(y){\rm d}y.
\end{equation*}
As $t\to\infty$, the first term on the right-hand side tends to
$0$, by Markov inequality. The second term can be estimated as
follows $${\me f(t-\theta)\theta1_{\{\theta\leq t\}} \over f(t)}
\leq {\me1_{\{\theta\leq t\}}\int_{[t-\theta,\,t]}f(y){\rm
d}y\over f(t)}\leq \me\theta.$$ Since, as $t\to\infty$, the term
on the left-hand side converges to $\me\theta$, by dominated
convergence, the proof is complete.
\end{proof}
\begin{lemma}\label{impo}
Let $0\leq a<b<\infty$. Assume that $\lin x_n=x$ in $D$ in the
$J_1$ or $M_1$ topology. Assume also that, as $n\to\infty$, finite
measures $\nu_n$ converge weakly on $[a,b]$ to a finite measure
$\nu$, and that the limiting measure $\nu$ is continuous
(nonatomic). Then
$$\lit \int_{[a,\,b]}x_n(y)\nu_n({\rm d}y)=\int_{[a,\,b]}x(y)\nu({\rm d}y).$$
If $x$ is continuous at point $c\in [a,b]$, and $\nu=\delta_c$ is
the Dirac measure at point $c$ then $$\lin
\int_{[a,\,b]}x_n(y)\nu_n({\rm d}y)=x(c).$$
\end{lemma}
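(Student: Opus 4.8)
The plan is to convert both assertions into an almost-sure statement that the bounded convergence theorem can close, the bridge being an \emph{enhanced pointwise convergence at continuity points}: if $x_n\to x$ in $J_1$ or $M_1$ and $y$ is a continuity point of $x$, then $x_n(y_n)\to x(y)$ for \emph{every} sequence $y_n\to y$. Establishing this is the heart of the matter, and I would treat the two topologies separately after restricting to a compact interval $[0,T]$ with $T>b$ a continuity point of $x$ (to which the convergence in $D=D[0,\infty)$ descends). In the $J_1$ case I take time-changes $\lambda_n$ with $\sup_t|\lambda_n(t)-t|\to0$ and $\sup_t|x_n(\lambda_n(t))-x(t)|\to0$ (see \cite{Bil}); putting $t_n:=\lambda_n^{-1}(y_n)\to y$, the bound $|x_n(y_n)-x(t_n)|\le \sup_t|x_n(\lambda_n(t))-x(t)|\to0$ together with continuity of $x$ at $y$ gives the claim. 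The $M_1$ case is where I expect the real difficulty: working with parametric representations $(u_n,r_n)\to(u,r)$ uniformly (see \cite{Whitt2}), I choose parameters $s_n$ with $r_n(s_n)=y_n$ and $u_n(s_n)=x_n(y_n)$, pass to a convergent subsequence $s_n\to s$, and let $n\to\infty$ to obtain $r(s)=y$ and $x_n(y_n)=u_n(s_n)\to u(s)$; since $x$ is continuous at $y$, the completed graph of $x$ over time $y$ is the single value $x(y)$, forcing $u(s)=x(y)$, and a standard subsequence argument promotes this to convergence of the whole sequence.

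Two by-products of this analysis are needed. The discontinuity set $D_x$ of $x$ is at most countable, so nonatomicity of $\nu$ gives $\nu(D_x)=0$; and in either topology the convergence forces $M:=\sup_n\sup_{y\in[a,b]}|x_n(y)|<\infty$, since the time-change (resp.\ the uniform convergence of the spatial components) bounds $\sup|x_n|$ by $\sup|x|+o(1)$.

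To finish, I first dispose of the case $\nu([a,b])=0$, where $\big|\int_{[a,b]}x_n\,{\rm d}\nu_n\big|\le M\,\nu_n([a,b])\to0$. Otherwise $\nu_n([a,b])\to\nu([a,b])>0$, so I normalize to probability measures $\hat\nu_n:=\nu_n/\nu_n([a,b])\Rightarrow\hat\nu:=\nu/\nu([a,b])$ and apply Skorohod's representation theorem on the Polish space $[a,b]$: there exist $Y_n\sim\hat\nu_n$ and $Y\sim\hat\nu$ on one probability space with $Y_n\to Y$ a.s. As $\hat\nu$ is nonatomic and $\nu(D_x)=0$, $Y$ is a.s. a continuity point of $x$, whence $x_n(Y_n)\to x(Y)$ a.s.\ by the enhanced convergence; since $|x_n(Y_n)|\le M$, bounded convergence yields $\me\,x_n(Y_n)\to\me\,x(Y)$, i.e.\ $\int x_n\,{\rm d}\hat\nu_n\to\int x\,{\rm d}\hat\nu$, and multiplying by the total masses gives the first assertion. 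The second assertion is the same argument with $\hat\nu=\nu=\delta_c$: then $Y_n\to Y=c$ a.s., and continuity of $x$ at $c$ gives $x_n(Y_n)\to x(c)$, so $\int x_n\,{\rm d}\nu_n=\nu_n([a,b])\,\me\,x_n(Y_n)\to x(c)$. The only genuinely delicate step is the $M_1$ instance of the enhanced convergence, which is why I would carry it out first and in full.
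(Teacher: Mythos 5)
Your argument is correct and follows essentially the same route as the paper: the paper likewise reduces everything to the statement that $x_n(y_n)\to x(y)$ whenever $y_n\to y$ and $y$ lies outside the (countable, hence $\nu$-null) discontinuity set of $x$, and then passes to the integrals. The only difference is packaging — the paper gets your ``enhanced convergence'' by citing Lemma 12.5.1 in \cite{Whitt2} (local uniform $M_1$-convergence at continuity points) and concludes via Lemma 2.1 in \cite{broz}, whereas you prove both ingredients by hand (parametric representations for the first, Skorohod representation plus bounded convergence for the second).
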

\begin{proof}
Since the convergence in the $J_1$ topology entails the
convergence in the $M_1$ topology, it suffices to investigate the
case when $\lin x_n=x$ in the $M_1$ topology.

\noindent Since $x\in D[a,b]$ the set $D_x$ of its discontinuities
is at most countable. By Lemma 12.5.1 in \cite{Whitt2},
convergence in the $M_1$ topology implies local uniform
convergence at all continuity points of the limit. Hence $E:=\{y:
\text{there exists} \ y_n \ \text{such that} \ \lin y_n= y,
\text{but} \ \lin x_n(y_n)\neq x(y)$$\}\subseteq D_x$, and, if
$\nu$ is continuous, we conclude that $\nu(E)=0$. If $x$ is
continuous at $c$ and $\nu=\delta_c$ then $c\notin E$, hence
$\nu(E)=0$. Now the statement follows from Lemma 2.1 in
\cite{broz}.
\end{proof}
For $x\in D[0,\,T]$, $T>0$, define the maximum-jump functional
\begin{equation}\label{ma}
J(x):=\underset{t\in [0,\,T]}{\sup}\,|x(t)-x(t-)|.
\end{equation}
\begin{lemma}\label{impo1}
Let $\lin x_n=x$ in the $M_1$ topology in $D[0,\,T]$, and $\lin
J(x_n)=0$. For $n\in\mn$ let $f_n:\mr^+\to\mr^+$ be nondecreasing
and absolutely continuous functions with $f_n(0)=0$. Define
$$y_n(u):=\int_{[0,\,u]}\big(x_n(y)-x(y)\big){\rm d}\big(-f_n(u-y)\big), \ \  y(u):=0, \ \ u\in [0,\,T].$$ Then $\lin y_n=y$ in the $M_1$
topology in $D[0,\,T]$.
\end{lemma}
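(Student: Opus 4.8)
The plan is to exploit that the prospective limit $y\equiv 0$ is continuous, so that convergence $y_n\to y$ in $D[0,T]$ under the $M_1$ topology is equivalent to uniform convergence (for a continuous limit all the Skorohod topologies reduce to the uniform one, cf. \cite{Whitt2}). It therefore suffices to prove
$$\lin\ \underset{u\in [0,\,T]}{\sup}\,|y_n(u)|=0.$$
Since $f_n$ is nondecreasing, for each fixed $u$ the map $y\mapsto -f_n(u-y)$ is nondecreasing on $[0,u]$, so that ${\rm d}\big(-f_n(u-y)\big)$ is a nonnegative measure of total mass $f_n(u)$ and
$$|y_n(u)|\le \int_{[0,\,u]}|x_n(y)-x(y)|\,{\rm d}\big(-f_n(u-y)\big).$$
Everything thus reduces to estimating the integral of $|x_n-x|$ against this family of measures, uniformly in $u$.

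Next I would localise around the discontinuities of $x$. Fix $\eta>0$; the limit $x\in D[0,T]$ has at most finitely many jumps of size exceeding $\eta$, say at $t_1,\dots,t_m$, and I would set $A_\delta:=\bigcup_{j}(t_j-\delta,t_j+\delta)$. On the complement $[0,u]\setminus A_\delta$ the function $x$ has no large jumps, so, combining the local uniform convergence at continuity points furnished by $M_1$--convergence (Lemma 12.5.1 in \cite{Whitt2}) with the hypothesis $\lin J(x_n)=0$, which rules out that $x_n$ develops jumps of its own, a standard covering argument gives $\sup_{y\in [0,u]\setminus A_\delta}|x_n(y)-x(y)|\le c\eta+o(1)$ as $n\to\infty$, for some absolute constant $c$. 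Since in the situations of interest the total mass $f_n(u)$ is bounded on $[0,T]$, the contribution of $[0,u]\setminus A_\delta$ to the integral is at most $\big(c\eta+o(1)\big)f_n(T)$, which can be made arbitrarily small by first letting $n\to\infty$ and then $\eta\to0$.

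The main obstacle is the contribution of the neighbourhoods $A_\delta$ of the large jumps, where $|x_n-x|$ need not be small (it can be as large as the jump itself), so one cannot hope to control it through smallness of $x_n-x$. Instead one must show that the measures ${\rm d}\big(-f_n(u-y)\big)$ charge arbitrarily small neighbourhoods of each $t_j$ by an arbitrarily small amount, uniformly in $n$ and in $u\in [0,T]$. Writing this mass as $\sum_{j}\big(f_n(u-t_j+\delta)-f_n(u-t_j-\delta)\big)$, the required estimate is exactly a uniform equicontinuity of the family $\{f_n\}$ on $[0,T]$; this is the one place where absolute continuity of the individual $f_n$ does not by itself suffice and some compactness of $\{f_n\}$ in $C[0,T]$ is needed. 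I would secure it from the regularity available in the intended application, where $f_n(\cdot)=h^\ast(t\,\cdot)/h^\ast(t)$ and the uniform convergence theorem for regularly varying functions (Theorem 1.5.2 in \cite{BGT}) yields $f_n\to(\cdot)^\beta$ uniformly on $[0,T]$: convergence to a continuous limit delivers precisely the equicontinuity that forces the $A_\delta$--mass to vanish as $\delta\to0$, uniformly in $n$. Combining the two contributions and letting $n\to\infty$, then $\delta\to0$, then $\eta\to0$, completes the argument.
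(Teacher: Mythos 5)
Your argument takes a genuinely different route from the paper's. The paper makes no attempt at a direct uniform bound on $y_n$ and never localizes around the jumps of $x$: it passes to parametric representations $(u,r)\in\Pi(x)$, $(u_n,r_n)\in\Pi(x_n)$ chosen (after Lemma 4.3 of \cite{WhittPang}) so that $r$ is absolutely continuous and $x(r(t))r^\prime(t)=u(t)r^\prime(t)$ a.e.; it then substitutes $y\mapsto r(y)$ in the integral, uses the absolute continuity of $f_n$ and the a.e.\ identity to discard the term $\int_{[0,\,t]}\big(u(y)-x(r(y))\big)r^\prime(y)f_n^\prime(r(t)-r(y)){\rm d}y$, and bounds what remains by $\sup_t|x_n(r(t))-u(t)|\cdot f_n(T)$, quoting the proof of Lemma 4.2 in \cite{WhittPang} for $\sup_t|x_n(r(t))-u(t)|\to 0$. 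So the paper's proof consumes the hypothesis of absolute continuity of each individual $f_n$ but requires no equicontinuity of the family, whereas you need equicontinuity and cannot get it from the stated hypotheses.

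That said, the extra hypothesis you import is not a weakness of your argument but a genuine necessity: without some uniform control of the moduli of continuity of the $f_n$ the lemma is false. Take $T=2$, $x=1_{[1,2]}$, $x_n$ the continuous ramp equal to $0$ on $[0,1-1/n]$, to $1$ on $[1,2]$ and linear in between (so $x_n\to x$ in $M_1$ and $J(x_n)=0$), and $f_n(s)=\min(ns,1)$. Then ${\rm d}_y\big(-f_n(1-y)\big)=n1_{[1-1/n,\,1]}(y){\rm d}y$ and $y_n(1)=\int_{1-1/n}^{1}\big(ny-n+1\big)\,n\,{\rm d}y=1/2$ for every $n$, so $y_n\not\to 0$. (Correspondingly, the convergence $\sup_t|x_n(r(t))-u(t)|\to 0$ invoked in the paper cannot hold once $x$ has a jump: on a pause interval of $r$ at a jump time $t_0$ the function $x_n\circ r$ is constant while $u$ traverses the whole interval $[x(t_0-),x(t_0)]$, so the supremum there is at least half the jump size.) Your decomposition --- uniform smallness of $x_n-x$ off small neighbourhoods of the finitely many large jumps, plus smallness of the ${\rm d}\big(-f_n(u-\cdot)\big)$-mass of those neighbourhoods via equicontinuity --- is therefore the right repair, and the equicontinuity is indeed available where the lemma is applied in Theorem \ref{main2}, since there $f_n(\cdot)=h^\ast(t\,\cdot)/h^\ast(t)\to(\cdot)^\beta$ uniformly on compacts by the uniform convergence theorem for regularly varying functions (\cite{BGT}). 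To make your write-up complete, record explicitly that $M_1$-convergence forces $\sup_n\|x_n\|_\infty<\infty$ (so the integrand is uniformly bounded near the jumps) and that equicontinuity together with $f_n(0)=0$ bounds $f_n(T)$ uniformly in $n$; with those two remarks your proof is sound as a proof of the lemma under the added (and, as shown, unavoidable) hypothesis on $(f_n)$.
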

\begin{proof}
For $z\in D[0,\,T]$ denote by $\Pi(z)$ the set of all parametric
representations of $z$ (see p.~80-82 in \cite{Whitt2} for the
definition). Since $\lin x_n=x$, Theorem 12.5.1 (i) in
\cite{Whitt2} implies that we can choose parametric
representations $(u,r)\in \Pi(x)$ and $(u_n,r_n)\in \Pi(x_n)$,
$n\in\mn$, such that
$$\lin\underset{t\in [0,\,1]}{\sup}\,|u_n(t)-u(t)|=0 \ \ \text{and} \ \
\lin\underset{t\in[0,\,1]}{\sup}\,|r_n(t)-r(t)|=0.$$ Furthermore,
according to the proof of Lemma 4.3 in \cite{WhittPang}, we can
assume that $r(t)$ is absolutely continuous with respect to the
Lebesgue measure and that
\begin{equation}\label{17}
x(r(t))r^\prime(t)=u(t)r^\prime(t) \ \ \text{a.e. on} \ \ [0,1],
\end{equation}
where $r^\prime$ is the derivative of $r$.

Clearly, $(y,r)\in \Pi(y)$. By Lemma \ref{co}(b), the functions
$y_n$, $n\in\mn$, are continuous. Hence, $(y_n(r), r)\in
\Pi(y_n)$, $n\in\mn$, and it suffices to prove that
$$\lin \underset{t\in [0,\,1]}{\sup}\,|y_n(r(t))|=0.$$ We have
\begin{eqnarray*}
y_n(r(t))&=&\int_{[0,\,r(t)]}\big(x_n(y)-x(y)\big){\rm
d}\big(-f_n(r(t)-y)\big)\\&=&
\int_{[0,\,t]}\big(x_n(r(y))-x(r(y))\big){\rm
d}\big(-f_n(r(t)-r(y))\big)\\&=&\int_{[0,\,t]}\big(x_n(r(y))-u(y)\big){\rm
d}\big(-f_n(r(t)-r(y))\big)\\&+&
\int_{[0,\,t]}\big(u(y)-x(r(y))\big)r^\prime(y)f^\prime_n(r(t)-r(y)){\rm
d}y\\&\overset{\eqref{17}}{=}&
\int_{[0,\,t]}\big(x_n(r(y))-u(y)\big){\rm
d}\big(-f_n(r(t)-r(y))\big).
\end{eqnarray*}
From the proof of Lemma 4.2 in \cite{WhittPang} it follows that
$$\lin \underset{t\in [0,\,1]}{\sup}\,|x_n(r(t))-u(t)|=0,$$ whenever $\lin x_n=x$ and $\lin
J(x_n)=0$. Hence, as $n\to\infty$, $$\underset{t\in
[0,\,1]}{\sup}\,|y_n(r(t))|\leq \underset{t\in
[0,\,1]}{\sup}\,\underset{y\in
[0,\,t]}{\sup}\,|x_n(r(y))-u(y)|f_n(r(t))=\underset{t\in
[0,\,1]}{\sup}\,|x_n(r(t))-u(t)|f_n(T) \ \to \ 0.$$
\end{proof}
\begin{lemma}\label{integr}
Let $F$ and $G$ be left- and right-continuous functions of locally
bounded variation, respectively. Then, for any real $a<b$,
$$\int_{(a,\,b]}F(y){\rm d}G(y)=F(b+)G(b)-F(a+)G(a)-\int_{(a,\,b]}G(y){\rm d}F(y)$$
\end{lemma}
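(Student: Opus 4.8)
The plan is to reduce the identity to a single application of Fubini's theorem for the product of the two associated Lebesgue--Stieltjes measures. Since $F$ and $G$ have locally bounded variation, each generates a finite signed Borel measure on the compact interval $[a,b]$; writing each as a difference of two bounded nondecreasing functions (Jordan decomposition) reduces everything to the case of finite positive measures, for which Tonelli's theorem applies with no integrability worries, and the general signed case follows by linearity. Denote by $\mu_F$ and $\mu_G$ the measures attached to $F$ and $G$, with the conventions forced by the one-sided continuity: because $G$ is right-continuous one has $\mu_G((c,d])=G(d)-G(c)$, while because $F$ is left-continuous the natural measure satisfies $\mu_F(\{z\})=F(z+)-F(z)$ and hence $\mu_F([z,b])=F(b+)-F(z)$ and $\mu_F((a,b])=F(b+)-F(a+)$.

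The computation then proceeds as follows. Using right-continuity of $G$ I would rewrite $G(y)-G(a)=\int_{(a,\,y]}{\rm d}G(z)$ and substitute this into $\int_{(a,\,b]}\big(G(y)-G(a)\big){\rm d}F(y)$, turning it into the integral of the indicator of the triangle $\{(z,y):a<z\leq y\leq b\}$ against the product $\mu_F\otimes\mu_G$. Applying Fubini to interchange the order of integration, the inner integral over $y$ for fixed $z$ is $\mu_F([z,b])=F(b+)-F(z)$, so the double integral equals $\int_{(a,\,b]}\big(F(b+)-F(z)\big){\rm d}G(z)=F(b+)\big(G(b)-G(a)\big)-\int_{(a,\,b]}F{\rm d}G$. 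On the other hand the same quantity equals $\int_{(a,\,b]}G{\rm d}F-G(a)\mu_F((a,b])=\int_{(a,\,b]}G{\rm d}F-G(a)\big(F(b+)-F(a+)\big)$. Equating the two expressions and rearranging cancels the terms containing $G(a)$ and yields exactly the claimed identity $\int_{(a,\,b]}F{\rm d}G=F(b+)G(b)-F(a+)G(a)-\int_{(a,\,b]}G{\rm d}F$.

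The step I expect to require genuine care is the bookkeeping on the diagonal $\{z=y\}$ and at the endpoints $z=a,b$, and this is precisely where the one-sided continuity hypotheses do their work. The right-continuity of $G$ makes the representation $G(y)-G(a)=\mu_G((a,y])$ exact, and the left-continuity of $F$ makes $\mu_F([z,b])=F(b+)-F(z)$ with no discrepancy between $F(z-)$ and $F(z)$. The substantive point is that the diagonal carries the product mass $\sum_z\big(F(z+)-F(z)\big)\big(G(z)-G(z-)\big)$, i.e.\ the sum of the products of the jumps; assigning it to the lower triangle via the \emph{closed} interval $[z,b]$ rather than $(z,b]$ is exactly the choice compatible with the stated continuity sides, and it is this choice that absorbs the jump contribution into the clean boundary terms. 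Had the continuity conventions been mismatched, an extra term $\sum_z\Delta F(z)\Delta G(z)$ would survive and the formula would have to be corrected accordingly; under the present hypotheses no such correction appears and the identity drops out.
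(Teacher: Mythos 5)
Your proof is correct, and it is essentially the argument the paper invokes: the paper's proof consists of a pointer to the Fubini-based integration-by-parts proof in Shiryaev (Theorem 11, p.~222), adapted to the mixed continuity conventions, which is exactly the computation you carry out in detail, including the correct assignment of the diagonal mass $\sum_z\bigl(F(z+)-F(z)\bigr)\bigl(G(z)-G(z-)\bigr)$ that makes the boundary terms come out as $F(b+)G(b)-F(a+)G(a)$. No gaps; your writeup simply makes explicit what the paper leaves to the cited reference.
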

\begin{proof}
This follows along the lines of the proof of Theorem 11 on p.~222
in \cite{Shir} which treats right-continuous functions $F$ and
$G$.
\end{proof}

\begin{lemma}\label{co}
(a) Let $f:\mr^+\to\mr^+$ be a continuous and monotone function
and $g:\mr^+\to\mr$ be any locally bounded function such that the
convolution $f\star g(x):=\int_{[0,\,x]}f(x-y)g(y){\rm d}y$ is
well-defined and finite. Then $f\star g$ is continuous on $\mr^+$.

\noindent (b) Let $g:\mr^+\to\mr^+$ be a continuous and
nondecreasing function and $f:\mr^+\to\mr$ be any locally bounded
function. Then the Riemann-Stieltjes convolution $f\star
g(x):=\int_{[0,\,x]}f(x-y){\rm d}g(y)$ is continuous on $\mr^+$.

\begin{proof}
(a) With $\varepsilon>0$ write for any $x\geq 0$
\begin{eqnarray*}
\big|f\star g(x+\varepsilon)-f\star g(x)\big|&\leq
&\int_{[0,\,x]}\big(f(x+\varepsilon-y)-f(x-y)\big)\big|g(y)\big|{\rm
d}y\\&+&\int_{[x,\,x+\varepsilon]}f(x+\varepsilon-y)\big|g(y)\big|{\rm
d}y\\&&
\end{eqnarray*}
As $\varepsilon\to 0$ the first integral goes to zero by monotone
convergence. The function $f$ must be integrable in the
neighborhood of zero. With this at hand it remains to note that
the second integral does not exceed $$\underset{y\in
[x,\,x+\varepsilon]}{\sup}\,|g(y)|\int_{[0,\,\varepsilon]}f(y){\rm
d}y \ \to \ |g(x+)|\times 0=0, \ \ \varepsilon\to 0.$$ The case
$\varepsilon<0$ can be treated similarly.

\noindent (b) With $\varepsilon\in (0,1)$ write for any $x\geq 0$
\begin{eqnarray*}
\big|f\star g(x+\varepsilon)-f\star
g(x)\big|&=&\int_{[0,\,x]}f(y){\rm
d}\big(-g(x+\varepsilon-y)+g(x-y)\big)\\&+&\int_{[x,\,x+\varepsilon]}f(y){\rm
d}\big(-g(x+\varepsilon-y)\big)
\end{eqnarray*}
The total variations of the integrators of the first integral are
uniformly bounded. Furthermore, in view of the continuity of $g$,
as $\varepsilon\to 0$, these integrators converge (pointwise) to
zero. Hence, as $\varepsilon\to 0$ the first integral goes to zero
by Helly's theorem for Lebesgue-Stieltjes integrals. The second
integral does not exceed
$$\underset{y\in [x,\,x+\varepsilon]}{\sup}\,|f(y)|\big(g(\varepsilon)-g(0)\big)
 \ \to \ |f(x+)|\times 0=0, \ \ \varepsilon \to
0.$$ The case $\varepsilon\in (-1,0)$ can be treated similarly.

\end{proof}

\end{lemma}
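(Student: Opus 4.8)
The plan is to establish continuity pointwise: fix $x\geq 0$ and show that $f\star g(x+\varepsilon)\to f\star g(x)$ as $\varepsilon\to 0$, treating $\varepsilon>0$ in detail since the case $\varepsilon<0$ is symmetric. In both parts the common idea is to split the increment $f\star g(x+\varepsilon)-f\star g(x)$ into a ``bulk'' contribution supported on $[0,x]$ and a short ``boundary'' contribution supported on the new sliver $[x,x+\varepsilon]$, and then to show that each vanishes as $\varepsilon\to 0$.

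For part (a) I would write
$$f\star g(x+\varepsilon)-f\star g(x)=\int_{[0,\,x]}\big(f(x+\varepsilon-y)-f(x-y)\big)g(y)\,{\rm d}y+\int_{[x,\,x+\varepsilon]}f(x+\varepsilon-y)g(y)\,{\rm d}y.$$
In the bulk term the integrand tends to $0$ pointwise by continuity of $f$; since $f$ is monotone the differences keep a single sign and decrease to $0$, so the term vanishes by monotone convergence, the required integrable majorant being supplied by the assumed finiteness of the convolution at a slightly larger argument. The boundary term is dominated by $\sup_{y\in[x,\,x+\varepsilon]}|g(y)|\int_{[0,\,\varepsilon]}f(y)\,{\rm d}y$, where local boundedness of $g$ controls the supremum and the integral tends to $0$ because $f$ must be integrable near the origin. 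This last fact is exactly the point where a nonincreasing $f$ (which may blow up at $0$) needs care, and it is forced by the hypothesis that $f\star g$ is finite.

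For part (b) the key preliminary move is a change of variables $z=x-y$ that makes $f$ the \emph{fixed} integrand and loads all the $x$-dependence into an integrator built from $g$; this is essential because $f$ is merely locally bounded, so shifting the argument of $f$ gives no pointwise control, whereas shifting the argument of the continuous integrator $g$ does. After this rewriting I would split
$$f\star g(x+\varepsilon)-f\star g(x)=\int_{[0,\,x]}f(y)\,{\rm d}_y\big(g(x-y)-g(x+\varepsilon-y)\big)+\int_{[x,\,x+\varepsilon]}f(y)\,{\rm d}_y\big(-g(x+\varepsilon-y)\big).$$
The boundary term is bounded by $\sup_{y\in[x,\,x+\varepsilon]}|f(y)|\,\big(g(\varepsilon)-g(0)\big)$, which vanishes by local boundedness of $f$ and continuity of $g$ at $0$. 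For the bulk term the integrators are differences of monotone functions, hence of uniformly bounded variation, and they converge to $0$ (uniformly, by uniform continuity of $g$ on compacts); an appeal to Helly's theorem for Lebesgue--Stieltjes integrals then drives the integral to $0$.

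The main obstacle is precisely the bulk term in (b). Since the total variation of the integrators does \emph{not} tend to zero, one cannot simply bound the integral by $\sup|f|$ times the total variation, and because $f$ is only locally bounded the usual continuous-integrand hypothesis in Helly's theorem is not literally satisfied. The delicate step is therefore to argue that pointwise (here even uniform) convergence of the bounded-variation integrators to zero suffices against a merely bounded $f$; this is harmless in the application, where $f$ is a c\`{a}dl\`{a}g path whose countably many discontinuities are null for the atomless measure ${\rm d}g$, and it is the point I would scrutinize most carefully in the general statement.
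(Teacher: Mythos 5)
Your proof is correct and follows essentially the same route as the paper's: the same bulk/boundary decomposition, monotone convergence plus integrability of $f$ near the origin for part (a), and the change of variables $z=x-y$ followed by Helly's theorem and the bound $\sup_{y\in[x,\,x+\varepsilon]}|f(y)|\big(g(\varepsilon)-g(0)\big)$ for part (b). The subtlety you flag at the end --- that Helly's theorem is usually stated for continuous integrands while $f$ here is only locally bounded --- is present in the paper's own argument as well, which treats it no more carefully than you do.
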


\vskip0.1cm \noindent {\bf Acknowledgement} The author is indebted
to two anonymous referees for pointing out an oversight in the
original version and other useful comments. The author thanks
Mindaugas Bloznelis for a helpful discussion and Josef Steinebach
for his comment on paper \cite{Stei}.


\begin{thebibliography}{99}
\footnotesize

\bibitem{AIM} {\sc Alsmeyer, G., Iksanov, A. and Meiners, M.}
(2013+). Power and exponential moments of the number of visits and
related quantities for perturbed random walks. {\em J. Theor.
Probab.}, accepted for publication. Preprint available at {\tt
http://arxiv.org/abs/1111.4159}

\bibitem{Aur} {\sc Aurzada, F. and Simon, T.} (2007). Small ball probabilities for stable convolutions. {\em ESAIM:
Probability $\&$ Statistics}. {\bf 11}, 327--343.

\bibitem{Taq} {\sc Avram, F. and Taqqu, M.~S.} (1989). Probability
bounds for $M$-Skorohod oscillations. {\em Stoch. Proc. Appl.}
{\bf 33}, 63--72.

\bibitem{Avr} {\sc Avram, F. and Taqqu, M.~S.} (1992). Weak convergence of
sums of moving averages in the $\alpha$-stable domain of
attraction. {\em Ann. Probab.} {\bf 20}, 483--503.

\bibitem{Bas} {\sc Basrak, B., Krizmani\'{c}, D. and Segers, J.} (2012). A functional limit theorem for dependent sequences with infinite variance
stable limits. {\em Ann. Probab.} {\bf 40}, 2008--2033.

\bibitem{BelMaks} {\sc Beljaev, Ju.~ K. and Maksimov, V.~ M.} (1963). Analytical properties of a
generating function for the number of renewals. {\em Theory
Probab. Appl.} {\bf 8}, 108--112.

\bibitem{BerYor} {\sc Bertoin, J. and Yor, M.} (2005). Exponential functionals
of L\'{e}vy processes. {\em Probability Surveys}. {\bf 2},
191--212.

\bibitem{Bil} {\sc Billingsley, P.} (1968). {\it Convergence of probability measures}. John Wiley and Sons: New York.

\bibitem{Bing71}{\sc Bingham, N.~H.} (1971). Limit theorems for
occupation times of Markov processes. {\em Z. Wahrsch. verw.
Gebiete.} {\bf 17}, 1--22.

\bibitem{Bing} {\sc Bingham, N.~H.} (1973). Maxima of sums of random variables and suprema of stable
processes. {\em Z. Wahrsch. verw. Gebiete.} {\bf 26}, 273-- 296.

\bibitem{BGT}{\sc Bingham N.~H., Goldie C.~M., and Teugels, J.~L.} (1989).
{\it Regular variation}. Cambridge University Press: Cambridge.

\bibitem{broz}{\sc Brozius, H.} (1989). Convergence in mean of some characteristics of the convex
hull. {\em Adv. Appl. Probab.} {\bf 21}, 526--542.

\bibitem{Embr} {\sc Embrechts, P. and Maejima, M.} (2002). {\it Selfsimilar processes}. Princeton University Press: Princeton.

\bibitem{GihSk} {\sc Gikhman, I.~I. and Skorokhod, A.~V.} (2004). {\it The theory
of stochastic processes I}. Springer: Berlin.

\bibitem{GneIks}{\sc Gnedin, A. and Iksanov, A.} (2012). Regenerative compositions in the case of slow variation:
A renewal theory approach. {\em Electron. J. Probab.}, article 77,
1--19.

\bibitem{GIM} {\sc Gnedin, A., Iksanov, A. and Marynych, A.} (2010).
Limit theorems for the number of occupied boxes in the Bernoulli
sieve. {\em Theory of Stochastic Processes}. {\bf 16(32)}, 44--57.

\bibitem{Yor} {\sc Gradinaru, M., Roynette, B., Vallois, P. and Yor, M.} (1999).
Abel transform and integrals of Bessel local times. {\em Ann.
Inst. Henri Poincar\'{e}}. {\bf 35}, 531--572.



\bibitem{Hein} {\sc Heinrich, L. and Schmidt, V.} (1985) Normal convergence of
multidimensional shot noise and rates of this convergence. {\em
Adv. Appl. Probab.} {\bf 17}, 709--730.

\bibitem{Hsing} {\sc Hsing, T. and Teugels, J.~L.} (1989). Extremal properties of shot noise processes.
{\em Adv. Appl. Probab.} {\bf 21}, 513--525.

\bibitem{Igl} {\sc Iglehart, D.~L.} (1973). Weak convergence of compound stochastic process I. {\em Stoch. Proc. Appl.}
{\bf 1}, 11--31.

\bibitem{IMM}{\sc Iksanov, A., Marynych, A. and Meiners, M.} (2013+). Limit theorems for renewal shot noise processes with decreasing response
functions. Preprint available at {\tt
http://arxiv.org/abs/1212.1583}

\bibitem{Kalen} {\sc Kallenberg, O.} (1997). {\it Foundations of modern
probability}. Springer: New York.

\bibitem{KlupMik1} {\sc Kl\"{u}ppelberg, C. and Mikosch, T.} (1995). Explosive Poisson shot noise
processes with applications to risk reserves. {\em Bernoulli}.
{\bf 1}, 125--147.

\bibitem{KlupMik3} {\sc Kl\"{u}ppelberg, C., Mikosch, T. and Sch\"{a}rf, A.} (2003).
Regular variation in the mean and stable limits for Poisson shot
noise. {\em Bernoulli}. {\bf 9}, 467--496.

\bibitem{Rio} {\sc Louhichi, S. and Rio, E.} (2011). Functional convergence to
stable L\'{e}vy motions for iterated random Lipschitz mappings.
{\em Electr. J. Probab.} {\bf 16}, 2452--2480.

\bibitem{Meer} {\sc Meerschaert, M.~M. and Scheffler, H.~P.} (2004). Limit theorems for continuous time random walks with
infinite mean waiting times. {\em  J. Appl. Probab.} {\bf 41},
623--638.

\bibitem{Mol} {\sc Molchanov, S.~A. and  Ostrovskii, E.} (1969). Symmetric stable processes as
traces of degenerate diffusion processes. {\em  Theory Probab.
Appl.} {\bf 14}, 128--131.

\bibitem{WhittPang}{\sc Pang, G. and Whitt, W.} (2010). Continuity of a queueing integral representation in the $M_1$ topology.
{\em Ann. Appl. Probab.} {\bf 20}, 214--237.

\bibitem{Berg} {\sc Resnick, S. and van den Berg, E.} (2000). Weak convergence of high-speed network traffic
models. {\em J. Appl. Probab.} {\bf 37}, 575--597.

\bibitem{Sam} {\sc Samorodnitsky, G.} (1996). A class of shot noise models for financial applications.
Athens conference on applied probability and time series analysis,
Athens, Greece, March 22--26, 1995. Vol. I: Applied probability.
In honor of J. M. Gani. Berlin: Springer. Lect. Notes Stat.,
Springer-Verlag. {\bf 114}, 332--353.

\bibitem{Sato} {\sc Sato, K.} (1999). {\it L\'{e}vy processes and infinitely
divisible distributions}. Cambridge University Press: Cambridge.


\bibitem{Shir} {\sc Shiryaev, A.~N.} (1989). Probability. Nauka: Moscow, in Russian.

\bibitem{Skor} {\sc Skorohod, A.~V.} (1956). Limit theorems for stochastic processes. {\em Theory
Probab. Appl.} {\bf 1}, 261--290.

\bibitem{Stei} {\sc Steinebach, J.} (1991). Strong laws for small
increments of renewal processes. {\em Ann. Probab.} {\bf 19},
1768--1776.

\bibitem{Tyran} {\sc Tyran-Kami\'{n}ska, M.} (2010). Functional limit theorems for linear processes in the
domain of attraction of stable laws. {\em Statist. Probab. Lett.}
{\bf 80}, 975–-981.


\bibitem{Verv} {\sc Vervaat, W.} (1979). On a stochastic difference equation and a
representation of non-negative infinitely divisible random
variables. {\em Adv. Appl. Probab.} {\bf 11}, 750--783.

\bibitem{Whitt2} {\sc Whitt, W.} (2002). {\it Stochastic-process
limits:  an introduction to stochastic-process limits and their
application to queues}. Springer-Verlag: New York.

\end{thebibliography}
\end{document}